\newtheorem{thm}{Theorem}[section]
\newtheorem{lem}[thm]{Lemma}
\newtheorem{proposition}[thm]{Proposition}
\newtheorem{definition}[thm]{Definition}
\newtheorem{remark}[thm]{Remark}
\numberwithin{equation}{section}
\begin{document}
\title[Mass-transportation approach to a 1D model with nonlocal velocity]{A mass-transportation approach to a one dimensional fluid mechanics model with
nonlocal velocity}
\author[J. A. Carrillo]{Jos\'e A. Carrillo}
\address{ICREA and Departament de Matem\`atiques, Universitat Aut\`onoma de Barcelona,
08193 Bellaterra (Barcelona), Spain. \textit{On leave from:} Department of
Mathematics, Imperial College London, London SW7 2AZ, UK.}
\author[L. C. F. Ferreira]{Lucas C. F. Ferreira}
\address{Universidade Estadual de Campinas, Departamento de Matem\'{a}tica, CEP
13083-970, Campinas-SP, Brazil.}
\author[J. C. Precioso]{Juliana C. Precioso}
\address{Universidade Estadual Paulista, Departamento de Matem\'atica, S\~ao Jos\'e do
Rio Preto-SP, CEP: 15054-000, Brazil.}
\thanks{JAC was partially supported by the Ministerio de Ciencia e Innovaci\'on, grant
MTM2011-27739-C04-02, and by the Ag\`encia de Gesti\'o d'Ajuts Universitaris i
de Recerca-Generalitat de Catalunya, grant 2009-SGR-345.}
\thanks{LCFF was supported by FAPESP-SP, CNPQ and CAPES, Brazil. }
\thanks{JCP was supported by CAPES, grant BEX2872/05-6, Brazil. }

\maketitle

\begin{abstract}
We consider a one dimensional transport model with nonlocal
velocity given by the Hilbert transform and develop a global
well-posedness theory of probability measure solutions. Both the
viscous and non-viscous cases are analyzed. Both in original and
in self-similar variables, we express the corresponding equations
as gradient flows with respect to a free energy functional
including a singular logarithmic interaction potential. Existence,
uniqueness, self-similar asymptotic behavior and inviscid limit of
solutions are obtained in the space $\mathcal{P}_{2}(\mathbb{R})$
of probability measures with finite second moments, without any
smallness condition. Our results are based on the abstract
gradient flow theory developed in \cite{Ambrosio}. An important
byproduct of our results is that there is a unique, up to
invariance and translations, global in time self-similar solution
with initial data in $\mathcal{P}_{2}(\mathbb{R})$, which was
already obtained in \textrm{\cite{Deslippe,Biler-Karch}} by
different methods. Moreover, this self-similar solution attracts
all the dynamics in self-similar variables. The crucial
monotonicity property of the transport between measures in one
dimension allows to show that the singular logarithmic potential
energy is displacement convex. We also extend the results to
gradient flow equations with negative power-law locally integrable
interaction potentials.
\end{abstract}


\textbf{Keywords:}\ \ Gradients flows, Optimal transport,
Asymptotic Behavior, Inviscid Limit

\section{Introduction}

In this work, we are interested in developing a well-posedness theory of
measure solutions to the equation
\begin{equation}
\left\{
\begin{array}
[c]{lcr}%
u_{t}+\left(  H(u)u\right)  _{x}=0 &  & \\
u(x,0)=u_{0}(x) &  &
\end{array}
\right.  \,, \label{eqprinc}%
\end{equation}
with general nonnegative initial Borel measures $u_{0}$. Here, the term $H(u)$
denotes the classical Hilbert transform
\[
H(u)=\frac{1}{\pi}P.V.\int_{\mathbb{R}}\frac{u(z)}{x-z}\,dz\,.
\]
Since the equation is of transport nature and in divergence form, we expect
sign preservation and mass conservation. Therefore, we will restrict our
attention to probability measures as initial data. This equation is nothing
else than a $1D$-dimensional continuity equation in which the velocity field
is given by the Hilbert transform and it has been proposed as a simplified
model in fluid mechanics \cite{Chae-Cordoba-Fontelos} and in dislocation
dynamics \cite{Biler-Karch} as we will discuss in next subsection. This
equation can be formally considered a particular example of the theory of
gradient flows in the space of probability measures \cite{Ambrosio} as it will
be further elaborated in subsection 1.2.

The main aim of this work is to show that unique measure solutions of
gradient-flow type can be constructed for the problem
\begin{equation}
\left\{
\begin{array}
[c]{lcr}%
u_{t}+\left(  H(u)u\right)  _{x}=\kappa u_{xx} &  & \\
u(x,0)=u_{0}(x) &  &
\end{array}
\right.  \, , \label{eq3}%
\end{equation}
with $\kappa\ge0$ and $u_{0}\in\mathcal{P}_{2}(\mathbb{R})$ the set of
probability measures on the real line with finite second moments. Moreover,
the solutions will continuously depend on both the initial data $u_{0}$ and
the viscosity parameter $\kappa\ge0$. The main tools of this construction are
the variational schemes based on optimal transportation theory originated for
the seminal work \cite{Jordan}.

Moreover, we will be able to characterize the large time behavior
of the solutions. In fact, we show that suitable scaled equations
related to \eqref{eqprinc} and \eqref{eq3} have unique stationary
solutions fixed by the normalization of the mass. Furthermore, we
are able to show that the solutions constructed converge for large
times to these stationary solutions exponentially fast in some
transport distance. These stationary solutions correspond to
self-similar solutions for the original equations.

This manuscript is organized as follows. In the next two subsections we recall
the main results already obtained in the literature and the origin of these
models. On the other hand, we introduce some basic notations and definitions
about optimal mass transportation theory essential to our construction.
Section 2 is devoted to introduce self-similar variables and rewrite our
problem in the form of a gradient flow in the space of probability measures of
a free energy functional. Key properties of the functionals involved are shown
in subsections 2.1 and 2.2. Finally, we state and prove our existence,
asymptotic behavior and inviscid limit results in Section 3.

\subsection{Motivation: Fluid and Fracture Mechanics}

One of the motivations to analyze these equations arose from the mathematical
fluid mechanics literature. In fact, it appears as a simplified one
dimensional model \cite{Baker-Morlet} mimicking the structure of the
$3D$-Navier-Stokes equations and the $2D$-quasi-geostrophic equations
\cite{Chae-Cordoba-Fontelos}:
\begin{equation}
\left\{
\begin{array}
[c]{lcr}%
u_{t}+(\theta\cdot\nabla)u=0 &  & \\
\theta=\nabla^{\bot}\phi,\hspace{0.2cm}u=-(-\triangle)^{\frac{1}{2}}\phi &  &
\\
u(x,0)=u_{0}(x), &  &
\end{array}
\right.  \label{eq1}%
\end{equation}
where $\nabla^{\bot}=(-\partial_{2},\partial_{1})$ and $u(x,t)$ represents the
air temperature. Since $\mbox{div}(R^{\bot}u)=0,$ rewriting the system
(\ref{eq1}) in terms of the Riesz transform given by \ \
\[
R_{j}(u)(x,t)=\frac{1}{2\pi}P.V.\int_{\mathbb{R}^{2}} \frac{(x_{j}-y_{j}%
)}{|x-y|^{3}}\,u(y,t)\,dy
\]
the result is
\begin{equation}
\left\{
\begin{array}
[c]{lcr}%
u_{t}+\mbox{div}\left[  \left(  R^{\bot}u\right)  u\right]  =0 &  & \\
u(x,0)=u_{0}(x), &  &
\end{array}
\right.  \label{eq2}%
\end{equation}
Considering $u(x,t)$ with $(x,t)\in[0,\infty)\times\mathbb{R}$ and
replacing the Riesz transform in (\ref{eq2}) by the Hilbert
transform in one dimension leads to \eqref{eqprinc} or \eqref{eq3}
with diffusion, see \cite{CLM} for previous related works and
simplified models. Mathematical fluid mechanics arguments have
been used to analyze existence and uniqueness, finite time blow-up
of smooth solutions, and other issues, see
\cite{Morlet,Chae-Cordoba-Fontelos,Cordoba-Fontelos1,Cordoba-Fontelos2,DongH,Li-Rodrigo}
and the references therein related to these equations and other
nonconservative variants.

More precisely in our case, sign-changing periodic $C^{1}$-solutions of
\eqref{eqprinc} blow up in finite time, in the sense that its $C^{1}$-norm
diverges in finite time as shown in \cite{Chae-Cordoba-Fontelos}. On the other
hand, global existence and uniqueness of smooth solutions for the Cauchy
problem on the whole real line is proved in \cite{Castro-Cordoba} for strictly
positive initial data for \eqref{eqprinc} and for general nonnegative initial
data for \eqref{eq3}. The same authors show that for non negative
touching-down initial data the Cauchy problem for \eqref{eqprinc} is locally
well-posed for smooth solutions and that solutions do blow up in finite time
in the $C^{1}$ norm.

Apart from the structural similarities, the equations \eqref{eq2} and
\eqref{eqprinc} have different properties. For instance, while the first one
has a Hamiltonian structure, the second one being one dimensional can be
considered rather as a gradient flow as we will discuss in next subsection.

The other source of motivation to analyze equations
\eqref{eqprinc} and \eqref{eq3} comes from dislocation dynamics in
crystals \cite{Head1,Head2,Head3,Deslippe}. Here, the unknown $u$
represents the number density of fractures per unit length in the
material. The existence of explicit self-similar solutions and the
convergence towards them was studied in
\cite{Deslippe,Biler-Karch} showing that nonnegative solutions
play an important role in the large time asymptotics of
\eqref{eq3} and related problems. In fact, we will give a
characterization of the self-similar solution as the minimizer of
a free energy functional intimately related to its gradient flow
structure. In this way, we will show that the solution does really
converge in suitable scaling and in transport distances to the
self-similar profile.

\subsection{Gradient Flows for Probability Measures}

Let us remind some basic facts about optimal mass transport, which will be
useful to our study of solutions of the Cauchy problem (\ref{eq3}). For more
details we refer the reader to \cite{Villani,Ambrosio}. Let us denote by
$\mathcal{P}(\mathbb{R}^{d})$ the space of probability measures on
$\mathbb{R}^{d}.$ We start reminding the definition of push forward of a
measure $\rho\in\mathcal{P}(\mathbb{R}^{d})$.

\begin{definition}
Let $\rho$ be a probability measure on $\mathbb{R}^{d}$ and let $T:\mathbb{R}%
^{d} \rightarrow\mathbb{R}$ be a Borel map. The push forward $T_{\sharp}%
\rho\in\mathcal{P}(\mathbb{R}^{d})$ of $\rho$ through $T$ is defined by
$T_{\sharp} \rho(I):=\rho(T^{-1}(I))$ for any Borel subset $I\subset
\mathbb{R}$. The measures $\rho$ and $T_{\sharp}\rho$ satisfies
\[
\int_{\mathbb{R}^{d}}f(T(x))d\rho(x)=\int_{\mathbb{R}^{d}}f(y)dT_{\sharp}%
\rho(y)\,,
\]
for every bounded or positive continuous function $f$.
\end{definition}

Let $\rho,\mu\in\mathcal{P}(\mathbb{R}^{d})$ and $T_{\rho}^{\mu}%
:\mathbb{R}^{d} \rightarrow\mathbb{R}^{d}$ such that $T_{\rho\text{ }\sharp
}^{\mu}\rho=\mu$. The map $T_{\rho}^{\mu}$ is called a transport map between
the probability measure $\rho$ and $\mu$. We also recall the notion of
transport plan between two probability measures.

\begin{definition}
Given two measures $\rho$ and $\mu$ of $\mathcal{P}(\mathbb{R}^{d})$ the set
of transport plans between them is defined by
\[
\Gamma(\rho,\mu):=\left\{  \gamma\in\mathcal{P}(\mathbb{R}\times
\mathbb{R}):\pi_{\sharp}^{1}\gamma=\rho,\pi_{\sharp}^{2}\gamma=\mu\right\}  ,
\]
where $\pi^{i}:\mathbb{R}^{d}\times\mathbb{R}^{d}\rightarrow\mathbb{R}^{d},$
$i=1,2$ are the projections onto the first and second coordinate: $\pi
^{1}(x,y)=x$, $\pi^{2}(x,y)=y$. In other words, transport plans are those
having marginals $\rho$ and $\mu.$
\end{definition}

Our aim is to study solutions of the Cauchy problem \eqref{eqprinc} and
(\ref{eq3}) in an appropriate subspace of $\mathcal{P}(\mathbb{R})$ endowed
with a transport distance, the so-called euclidean Wasserstein distance.
Consider the set
\[
\mathcal{P}_{2}(\mathbb{R}^{d})=\left\{  \rho\in\mathcal{P}(\mathbb{R}^{d}
):\int_{\mathbb{R}^{d}}\left\vert x\right\vert ^{2}d\rho(x)<\infty\right\}  \,
.
\]
The euclidean Wasserstein distance is defined on $\mathcal{P}_{2}%
(\mathbb{R}^{d})$ as:

\begin{definition}
For any probability measure $\rho, \mu\in\mathcal{P}_{2} (\mathbb{R}^{d})$ the
euclidean Wasserstein distance between them is defined by
\[
d_{2} (\rho, \mu):= \min\left\{  \left(  \int_{\mathbb{R}^{d} \times
\mathbb{R}^{d}} |y-x|^{2} d \gamma(x,y)\right)  ^{\frac{1}{2}}: \gamma
\in\Gamma(\rho, \mu)\right\}  .
\]

\end{definition}

We denote by $\Gamma_{0}(\rho,\mu)$ the set of optimal plans, i.e., the subset
of $\Gamma(\rho,\mu)$ where the minimum is attained, i.e,
\[
\Gamma_{0}(\rho,\mu)=\left\{  \gamma\in\Gamma(\rho,\mu):\int_{\mathbb{R}^{d}
\times\mathbb{R}^{d}}|y-x|^{2}d\gamma(x,y)=d_{2}^{2}(\rho,\mu)\right\}  .
\]
The space $\mathcal{P}_{2} (\mathbb{R}^{d})$ endowed with $d_{2}$ becomes a
complete metric space. The convergence in $d_{2}$ is equivalent to weak-$*$
convergence as measures together with convergence of the second moments, see
\cite[Theorem 7.12]{Villani}. We will denote by $\mathcal{P}_{2}%
^{ac}(\mathbb{R}^{d})$ the subset of probability measures with absolutely
continuous densities with respect to Lebesgue measure and finite second
moments. It is well-known that for any $\rho,\mu\in\mathcal{P}_{2}%
^{ac}(\mathbb{R}^{d})$, the minimum in the definition of $d_{2}$ is achieved
by a plan defined by an optimal map, i.e., by a plan defined by $\gamma
=(1_{\mathbb{R}^{d}}\times T_{\rho}^{\mu})_{\#}\rho$.

Let us remark that the Wasserstein distance in one dimension can be easily
characterized since the optimal transport map, if exists, in one dimension is
always a monotone non decreasing function. In fact, as shown in \cite[Theorem
2.18]{Villani}, the optimal plan in one dimension is independent of the cost
and given in terms of the distribution functions associated to the probability
measures and their pseudo-inverses. In fact, one can show that:

\begin{lem}
\label{tnondec} Given $\rho,\mu\in\mathcal{P}_{2}^{ac}(\mathbb{R})$, the
optimal transport map $T_{\rho}^{\mu}$ in $\mathbb{R}$ for $d_{2}$ between
them is essentially increasing, i.e., $T_{\rho}^{\mu}$ is increasing except in
a $\rho$-null set.
\end{lem}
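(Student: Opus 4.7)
The plan is to combine the explicit one-dimensional formula for the optimal transport map with the absolute continuity of $\mu$ to rule out plateaus of positive $\rho$-mass.

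First I would invoke the characterization already quoted from \cite[Theorem 2.18]{Villani}: in one dimension the optimal plan for the quadratic cost is independent of the cost and, since both $\rho$ and $\mu$ are absolutely continuous, it is concentrated on the graph of
\[
T_{\rho}^{\mu}=F_{\mu}^{-1}\circ F_{\rho},
\]
where $F_{\rho}$, $F_{\mu}$ are the cumulative distribution functions and $F_{\mu}^{-1}$ denotes the generalized pseudo-inverse. Since $F_{\rho}$ is non-decreasing and $F_{\mu}^{-1}$ is non-decreasing as well, the composition $T_{\rho}^{\mu}$ is non-decreasing on $\mathbb{R}$. Alternatively, one can obtain the same conclusion directly from the cyclical monotonicity of the support of the optimal plan: in one dimension cyclical monotonicity for $|x-y|^{2}$ reduces to ordinary monotonicity, so any two points $(x_{1},y_{1})$, $(x_{2},y_{2})$ in the support of $\gamma_{0}\in\Gamma_{0}(\rho,\mu)$ with $x_{1}<x_{2}$ must satisfy $y_{1}\le y_{2}$.

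The second step is to upgrade ``non-decreasing'' to ``essentially increasing''. Let $\mathcal{N}$ be the set of points $x\in\mathbb{R}$ at which $T_{\rho}^{\mu}$ fails to be strictly increasing, i.e., those $x$ belonging to some non-trivial interval on which $T_{\rho}^{\mu}$ is constant. Because $T_{\rho}^{\mu}$ is monotone, these maximal plateaus are pairwise disjoint and each one contains a rational, so they form a countable family $\{I_{n}\}_{n\in\mathbb{N}}$ with $\mathcal{N}=\bigcup_{n}I_{n}$. On each $I_{n}$ the map takes a single value $y_{n}$, hence
\[
\mu(\{y_{n}\})=T_{\rho\,\sharp}^{\mu}\rho(\{y_{n}\})=\rho\bigl((T_{\rho}^{\mu})^{-1}(\{y_{n}\})\bigr)\ge\rho(I_{n}).
\]
Since $\mu$ is absolutely continuous with respect to the Lebesgue measure, $\mu(\{y_{n}\})=0$, and therefore $\rho(I_{n})=0$ for every $n$. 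Summing over the countable family yields $\rho(\mathcal{N})=0$, which is exactly the assertion that $T_{\rho}^{\mu}$ is essentially increasing.

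There is no serious obstacle here; the only point that requires a little care is the bookkeeping of the plateaus. One has to argue that the exceptional set is a countable union of intervals (so that the pushforward argument can be applied plateau by plateau) rather than an uncountable union, which is guaranteed by monotonicity alone. If one preferred a more intrinsic formulation, the same reasoning can be phrased by saying that the distributional derivative of the non-decreasing function $T_{\rho}^{\mu}$ is strictly positive $\rho$-a.e., which follows again from the fact that $T_{\rho\,\sharp}^{\mu}\rho=\mu$ has no atoms.
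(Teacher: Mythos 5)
Your proof is correct. The first half coincides with the paper's argument: both invoke the one-dimensional characterization $T_{\rho}^{\mu}=F_{\mu}^{-1}\circ F_{\rho}$ from \cite[Theorem 2.18]{Villani} to get monotonicity (your aside via cyclical monotonicity of the support of an optimal plan is an acceptable alternative). Where you diverge is in upgrading ``non-decreasing'' to ``essentially increasing'': the paper simply cites \cite[Remark 6.2.11]{Ambrosio} for essential injectivity of the optimal map, attributing it to the absolute continuity of $\rho$, whereas you give a self-contained argument: the set where strict monotonicity fails is a countable union of maximal non-degenerate plateaus $I_{n}$ on which $T_{\rho}^{\mu}\equiv y_{n}$, and the push-forward identity gives $\rho(I_{n})\le\rho\bigl((T_{\rho}^{\mu})^{-1}(\{y_{n}\})\bigr)=\mu(\{y_{n}\})=0$ because $\mu$ has no atoms. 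Your bookkeeping of the plateaus (disjoint maximal intervals, each containing a rational, hence countably many) is sound, and outside their union any equality $T_{\rho}^{\mu}(x)=T_{\rho}^{\mu}(y)$ with $x<y$ would force a non-trivial constancy interval, so the map is indeed increasing off a $\rho$-null set. What your route buys is that it avoids the external citation and makes transparent exactly what is used: non-atomicity of $\rho$ to have the monotone map push $\rho$ onto $\mu$, and non-atomicity of $\mu$ (rather than full absolute continuity) to kill the plateaus; the paper's route is shorter but leans on the abstract essential-injectivity statement in \cite{Ambrosio}.
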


\begin{proof}
Indeed, we can use the distribution function $F_{\rho}(x):=\rho((-\infty,x))$,
and define its pseudo-inverse of $F_{\rho}$ by the formula
\[
F_{\rho}^{-1}(s):=\sup\left\{  x\in\mathbb{R}:F_{\rho}(x)\leq s\right\}
,\hspace{0.3cm}\text{for }s\in\lbrack0,1].
\]
In one dimension, the optimal map for $d_{2}$, and for general costs, is given
by $T_{\rho}^{\mu}=F_{\mu}^{-1}\circ F_{\rho}$ satisfying obviously $T_{\rho
}^{\mu}(s_{1})\leq T_{\rho}^{\mu}(s_{2})$ for all $0\leq s_{1}\leq s_{2}\le1$.
Thus, the optimal transport map $T_{\rho}^{\mu}$ is nondecreasing. Since
$\rho\in\mathcal{P}^{ac}(\mathbb{R})$, $T_{\rho}^{\mu}$ is an injective
function except in a $\rho$-null set (see \cite[ Remark 6.2.11]{Ambrosio}).
Therefore it follows at once that $T_{\rho}^{\mu}$ is increasing except in a
$\rho$-null set, i.e., it is essentially increasing.
\end{proof}

Following the seminal ideas for the porous medium equation in \cite{Otto} and
the linear Fokker-Planck equation in \cite{Jordan}, a theory of gradient flows
in the space of probability measures $(\mathcal{P}_{2} (\mathbb{R}^{d}%
),d_{2})$ has been fruitfully applied to general class of
equations in the last decade
\cite{Ambrosio,Carrillo1,Carrillo,Agueh}. These equations are
continuity equations where the velocity field is given by the
gradient of the variational derivative of an energy functional.
More precisely, they are of the form
\begin{equation}
\label{generalequations}\frac{\partial\rho}{\partial t}=\mbox{div }\left(
\rho\nabla\frac{\delta{\mathcal{E}}}{\delta\rho}\right)  , \qquad
\mbox{in}\qquad(0,+\infty)\times\mathbb{R}^{d}\,,
\end{equation}
where the free energy functional $\mathcal{E}$ is given by
\begin{equation}
\label{generalfunctionals}{\mathcal{E}}[\rho]:=\int_{\mathbb{R}^{d}}
U(\rho(x))\, dx + \int_{\mathbb{R}^{d}} \rho(x)\, V(x)\, dx + \frac12
\iint_{\mathbb{R}^{d}\times\mathbb{R}^{d}} W(x-y)\, \rho(x)\,\rho(y)\, dx \,
dy\;
\end{equation}
under the basic assumptions $U:\mathbb{R}^{+}\to\mathbb{R}$ is a density of
internal energy, $V:\mathbb{R}^{d}\to\mathbb{R}$ is a confinement potential
and $W:\mathbb{R}^{d}\to\mathbb{R}$ is a symmetric interaction potential. The
internal energy $U$ should satisfy the following dilation condition,
introduced in McCann~\cite{McCann}
\begin{equation}
\label{condU}\lambda\longmapsto\lambda^{d} U(\lambda^{-d}) \qquad\text{is
convex non-increasing on $\mathbb{R}^{+}$}.
\end{equation}
The most classical case of application, as it is for our case, is $U(s) =
s\log s$, which identifies the internal energy with Boltzmann's entropy.

We can check that, at least formally, our equations of interest
\eqref{eqprinc} and \eqref{eq3} are of the form \eqref{generalequations} in
$d=1$ defined by the functional \eqref{generalfunctionals} with the choices:
$U=V=0$, and $W(x)=-\frac1\pi\log|x|$; and $U(s) = \kappa\,s\log s$, $V=0$,
and $W(x)=-\frac1\pi\log|x|$, respectively. However, the theory developed in
\cite{Ambrosio} is not directly applicable to \eqref{eqprinc} and \eqref{eq3}
for two reasons: this theory uses a convexity property of the functional
$\mathcal{E}$ that we will discuss next and the potentials $V,W$ have to be
smooth functions while we deal with the singular potential $-\frac1\pi\log|x|$
with an apriori unclear convexity properties.

The needed notion of convexity for functionals on measures was introduced in
\cite{McCann} and named displacement convexity. This notion provides
functionals of the form \eqref{generalfunctionals} with a natural convexity
structure allowing to show that the variational scheme introduced in
\cite{Jordan} is convergent under smoothness and convexity assumptions of the
confining and interaction potentials $V,W$ and the convexity property of the
internal energy in \eqref{condU}, see \cite{Ambrosio} for precise statements.
Let us define
\begin{equation}
\label{deflog}\tilde W(x)=
\begin{cases}
-\frac1\pi\log|x| & \mbox{ for } x\ne0\\
+\infty & \mbox{ at } x=0
\end{cases}
\, .
\end{equation}
In our case, we will show that the interaction functional $\mathcal{E}$ with
$U=V=0$ and $W=\tilde W$ given by \eqref{deflog} in one dimension is indeed
displacement convex. The intuition behind this is that $\tilde W(x)$ is
clearly convex for $x\ge0$ with the definition above and the optimal map
between two measures is essentially increasing as shown in Lemma
\ref{tnondec}. Therefore, when transporting measures we only ``see'' the
convex part of $\tilde W(x)$.

On the other hand, this convexity will allow us to avoid the singularity too.
In plain words, we will show that this interaction potential is extremely
repulsive in one dimension producing that any initial measure is
instantaneously regularized to an absolutely continuous measure for all $t>0$.
This behavior is very interesting compared to fully attractive potentials. In
fact, equation \eqref{generalequations} has been studied in $d=1$ with the
displacement concave attractive potential $W(x)=\frac1\pi\log|x|$, $U(s) =
\kappa\,s\log s$ and $V=0$, the so-called one dimensional version of the
Patlak-Keller-Segel model, in \cite{Carrillo2}. There, it is shown that the
variational scheme in \cite{Jordan} converges to a weak solution of the
equation in case the diffusion $\kappa$ does not go below certain critical
value. Let us point out that nonpositive solutions to \eqref{eq3} corresponds
easily to nonnegative solutions to this 1D-PKS model via reflection, cf.
\cite{Castro-Cordoba}.

Other related works for fully attractive potentials may lead to finite time
blow-up, in the sense of finite time aggregation to Delta Dirac points, see
\cite{BCL,CDFLS}. For fully repulsive potentials like the one we consider
here, we are aware about the recent work in \cite{BLL} dealing with the
asymptotic behavior of $L^{1}\cap L^{\infty}$-solutions in dimensions $d\ge2$
for the Newtonian potential from a more fluid mechanics perspective.

\section{Free Energy Properties}

With the purpose in mind to give a well-posedness theory for probability
measure solutions to \eqref{eqprinc} and \eqref{eq3}, we should keep in mind
that we are also interested in the asymptotic behaviour of the solutions. For
both reasons, it is obvious that a deep preliminary study of the minimization
and convexity properties of the free energy functionals involved has to be performed.

In order to find self-similar solutions to \eqref{eq3}, we will need to
rescale variables, as usually done in nonlinear diffusion equations \cite{CT}
to translate possible self-similar solutions onto stationary solutions. The
rescaled equations can also be considered gradient flows of certain free
energy functionals which are uniformly 1-convex functionals in the sense of
displacement convexity. These are the objectives of this section.

\subsection{Self-similar variables and gradient flow structure}

We introduce the following self-similar variables
\begin{equation}
\left\{
\begin{array}
[c]{lcr}%
y=x(1+2t)^{-\frac{1}{2}},\hspace{0.5cm}\mbox{for all }t>0\hspace
{0.2cm}\mbox{and}\hspace{0.2cm}x\in\mathbb{R} &  & \\
\tau=\dfrac{1}{2}\log(1+2t) &  &
\end{array}
\right.  \label{change}%
\end{equation}
Now, observe that if $u(x,t)$ is a solution of system (\ref{eq3}), then the
function
\begin{equation*}
\rho(y,\tau)=(2t+1)^{\frac{1}{2}}u(x,t) 
\end{equation*}
with $(y,\tau)$ defined by \eqref{change} satisfies the equation
\begin{equation}
\partial_{\tau}\rho=\partial_{y}(y\rho)+\kappa\partial_{yy}\rho-\left(
H(\rho)\rho\right)  _{y}. \label{eq-resc}%
\end{equation}
Then, we can write the system (\ref{eq3}) in the new variables as
\begin{equation}
\left\{
\begin{array}
[c]{lcr}%
\partial_{\tau}\rho=\partial_{y}(y\rho)+\kappa\partial_{yy}\rho-\left(
H(\rho)\rho\right)  _{y},\hspace{0.5cm} & \forall\tau>0\mbox{ and }y\in
\mathbb{R} & \\[2mm]%
\rho(y,0)=u_{0} & \forall y\in\mathbb{R} &
\end{array}
\right.  \,. \label{eq4.1}%
\end{equation}

Equation \eqref{eq4.1} has a gradient flow structure in the sense of
subsection 1.2, i.e, we can rewrite it in the following form
\begin{equation}
\frac{\partial\rho}{\partial t}=\frac{\partial}{\partial y}\left[  \rho
\frac{\partial}{\partial y}\left(  \kappa\log\rho-\frac{1}{\pi}\log|y|\ast
\rho+\frac{y^{2}}{2}\right)  \right]  ,\label{eq5.1}%
\end{equation}
where we have replaced the letter $\tau$ by $t$ again. {F}rom now
on, we identify the time dependent probability measure
$\rho(\cdot,t)=\rho_{t}$ with its
density with respect to Lebesgue and we use the notation $d\rho_{t}%
=d\rho(x,t)=\rho(x,t)\,dx$.

Let us begin by introducing a precise definition of a free energy functional
$E_{\kappa,\alpha}$ on the space of probability measures $\mathcal{P}%
_{2}(\mathbb{R})$. We define $E_{\kappa,\alpha}:\mathcal{P}_{2}(\mathbb{R}%
)\rightarrow$ $\mathbb{R\cup\{+\infty\}}$ as
\begin{equation}
E_{\kappa,\alpha}[\rho]=%
\begin{cases}
\kappa\,\mathcal{U}[\rho]+\alpha\,\mathcal{V}[\rho]+\mathcal{W}[\rho] &
\mbox{for }\rho\in\mathcal{P}_{2}^{ac}(\mathbb{R})\\[2mm]%
+\infty & \mbox{otherwise}
\end{cases}
, \label{func3}%
\end{equation}
with $\alpha,\kappa\geq0$ and where for $\rho\in\mathcal{P}_{2}(\mathbb{R})$
\begin{align*}
&  \mathcal{U}[\rho]:=%
\begin{cases}
\displaystyle\int_{\mathbb{R}}\rho(x)\log\rho(x)\,dx & \mbox{for
}\rho\in\mathcal{P}_{2}^{ac}(\mathbb{R})\\[2mm]%
+\infty & \mbox{otherwise}
\end{cases}
\,,\qquad\mathcal{V}[\rho]:=\int_{\mathbb{R}}\frac{x^{2}}{2}\rho(x)\,dx\,,\\
&  \mbox{and}\qquad\mathcal{W}[\rho]:=\frac{1}{2}\int_{\mathbb{R}^{2}}%
\tilde{W}(x-y)\rho(x)\rho(y)\,dx\,dy\,,
\end{align*}
where $\tilde{W}$ is given by (\ref{deflog}). We can now identify that
\eqref{eq4.1} or \eqref{eq5.1}, \eqref{eqprinc} and \eqref{eq3} belong to the
class of equations \eqref{generalequations} with the choices $W(x)=\tilde
{W}(x)$, $U(s)=\kappa\,s\log s$ and $V=\alpha\frac{x^{2}}{2}$ with different
values for $\kappa$ and $\alpha$. Thus, they are formal gradient flows of the
corresponding free energy functionals $E_{\kappa,\alpha}[\rho]$. It can be
easily checked that the functional $E_{\kappa,\alpha}$ is formally a Lyapunov
functional for the equation (\ref{eq-resc}), i.e,
\[
\frac{d}{dt}E_{\kappa,\alpha}[\rho(t)]=-\mathcal{L}_{\kappa,\alpha}%
[\rho(t)]\leq0
\]
where
\[
\mathcal{L}_{\kappa,\alpha}[\rho]:=\int_{\mathbb{R}}\left(  \left(  \kappa
\log\rho(x)+\alpha\frac{x^{2}}{2}-\int_{\mathbb{R}}\tilde{W}(x-y)\rho
(y)dy\right)  _{x}\right)  ^{2}\rho(x)\,dx\,.
\]

Let us remark that the functional $\mathcal{W}$ is also known as the
logarithmic energy of $\rho$ as introduced and deeply analyzed in \cite{Saff}
(see also \cite{Voiculescu II, Voiculescu IV}). The next lemma shows the lower
semi-continuity of the functionals $\mathcal{U}(\rho)$, $\mathcal{V}(\rho)$,
and $\mathcal{W}(\rho)$, and as a consequence, of the functional
$E_{\kappa,\alpha}$.

\begin{lem}
\label{weak} The functionals $\mathcal{U}$, $\mathcal{V}$, and $\mathcal{W}$
are lower semi-continuous in $\mathcal{P}_{2}(\mathbb{R})$ with respect to
$d_{2}$. Moreover, the functionals $\mathcal{U}$ and $E_{0,\alpha}$ with
$\alpha>0$ are weak-$*$ lower semi-continuous in $\mathcal{P}_{2}(\mathbb{R})$.
\end{lem}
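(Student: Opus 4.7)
The plan is to prove lsc of each of $\mathcal{U}$, $\mathcal{V}$, $\mathcal{W}$ separately with respect to $d_2$, and then reassemble the analysis of $\mathcal{W}$ and $\mathcal{V}$ to upgrade $E_{0,\alpha}$ to narrow (weak-$\ast$) lsc when $\alpha>0$. The case of $\mathcal{V}$ is immediate: truncating $V_R(x):=\min(x^2/2,R)\in C_b(\mathbb{R})$ gives $\mathcal{V}[\rho]=\sup_R\int V_R\,d\rho$, a supremum of narrowly continuous functionals, so narrowly and $d_2$-lsc. For the Boltzmann entropy $\mathcal{U}$ I would invoke the classical convex-duality representation
$$\mathcal{U}[\rho]=\sup\Bigl\{\int_{\mathbb{R}}\phi\,d\rho-\int_{\mathbb{R}}e^{\phi-1}\,dx:\phi\in C_b(\mathbb{R}),\ e^{\phi-1}\in L^{1}(\mathbb{R})\Bigr\},$$
coming from the Legendre conjugate $U^{\ast}(t)=e^{t-1}$ of $U(s)=s\log s$ (as in \cite{Ambrosio}), with the convention $\mathcal{U}[\rho]=+\infty$ on $\mathcal{P}_2\setminus\mathcal{P}_2^{ac}$. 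Each admissible functional is narrowly continuous in $\rho$, so the supremum is narrowly lsc; in particular it is also $d_2$-lsc.

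The main obstacle is $\mathcal{W}$: $\tilde W$ has a logarithmic singularity on the diagonal and, worse, $\tilde W(x-y)\to-\infty$ as $|x-y|\to\infty$, so one cannot approximate it from below by bounded continuous functions in any direct way. My first step is the elementary bound $\log|x-y|\le|x|+|y|$ valid for all $x\neq y$ (trivial when $|x-y|\le 1$ since the left-hand side is nonpositive, and when $|x-y|\ge 1$ from $\log t\le t$ applied to $t=|x-y|\le|x|+|y|$). Choosing a large enough constant $C$, the function
$$k(x,y):=\tilde W(x-y)+\frac{1}{\pi}\bigl(|x|+|y|\bigr)+C$$
is nonnegative and lower semi-continuous on $\mathbb{R}^2$. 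Given $\rho_n\to\rho$ in $d_2$, the product measures $\rho_n\otimes\rho_n$ converge narrowly to $\rho\otimes\rho$ on $\mathbb{R}^2$, and the Portmanteau inequality for nonnegative lsc integrands yields $\iint k\,d(\rho\otimes\rho)\le\liminf_n\iint k\,d(\rho_n\otimes\rho_n)$. On the other hand, $\rho\mapsto\int|x|\,d\rho$ is $d_2$-continuous because $d_2$-convergence forces convergence of second moments and thus uniform integrability of $|x|$. Combining,
$$\mathcal{W}[\rho]=\tfrac{1}{2}\iint k\,d\rho\,d\rho-\tfrac{1}{\pi}\int|x|\,d\rho-\tfrac{C}{2}$$
is $d_2$-lsc as a sum of a $d_2$-lsc and a $d_2$-continuous functional.

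For the narrow lsc of $E_{0,\alpha}=\alpha\mathcal{V}+\mathcal{W}$ with $\alpha>0$, the key point is that the quadratic confinement $\alpha x^2/2$ dominates the defect $|x|/\pi$ produced by the decomposition of $\mathcal{W}$. Reorganising,
$$E_{0,\alpha}[\rho]=\tfrac{1}{2}\iint k\,d\rho\,d\rho+\int\!\Bigl[\alpha\frac{x^2}{2}-\frac{|x|}{\pi}\Bigr]d\rho-\tfrac{C}{2},$$
the bracketed function is bounded below on $\mathbb{R}$, so after absorbing its minimum into the additive constant it becomes a nonnegative continuous function, and the associated linear functional is narrowly lsc by Portmanteau; the double integral involving $k$ is narrowly lsc by the same reasoning as above. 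Hence $E_{0,\alpha}$ is narrowly lsc on $\mathcal{P}_2(\mathbb{R})$. The real effort, as indicated, is concentrated in the bound $\log|x-y|\le|x|+|y|$ and the resulting splitting of $\tilde W$; once this is in place, the rest of the argument is a standard combination of narrow lsc for integrals of nonnegative lsc functions with the elementary $d_2$-continuity of the first moment.
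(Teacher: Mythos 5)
Your handling of $\mathcal{V}$, $\mathcal{W}$ and $E_{0,\alpha}$ is correct and close in spirit to the paper's argument: where you split $\tilde W(x-y)$ into a nonnegative lower semi-continuous kernel plus first-moment terms (via $\log|x-y|\le|x|+|y|$) and use monotone/Portmanteau approximation together with the $d_2$-continuity of $\int|x|\,d\rho$, the paper folds the confinement directly into the single kernel $R(x,y)=-\frac{1}{2\pi}\log\bigl(|x-y|e^{-\alpha\pi(x^2+y^2)/2}\bigr)$, which is l.s.c. and bounded below, approximates it monotonically by smooth compactly supported functions to get weak-$*$ lower semicontinuity of $E_{0,\alpha}$, and then recovers the $d_2$-statement for $\mathcal{W}$ from the $d_2$-continuity of $\mathcal{V}$; the two decompositions are interchangeable bookkeeping and both are sound.

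The genuine gap is in the entropy term $\mathcal{U}$. First, your admissible class is empty: any $\phi\in C_b(\mathbb{R})$ is bounded below, so $e^{\phi-1}\ge e^{-\|\phi\|_\infty-1}>0$ everywhere and is never in $L^1(\mathbb{R})$; the supremum as written is over the empty set, so the representation proves nothing. Second, the strategy cannot be repaired as such: a representation of $\mathcal{U}$ as a supremum of narrowly continuous functionals would show that $\mathcal{U}$ is weak-$*$ l.s.c. along arbitrary narrowly convergent sequences in $\mathcal{P}_2(\mathbb{R})$, and this fails because Lebesgue measure is infinite and the negative part of $\rho\log\rho$ is only controlled through moments: taking $\rho_n=(1-\tfrac1n)g+h_n$, with $g$ a fixed nice compactly supported density and $h_n$ the uniform density of total mass $\tfrac1n$ on an interval of length $e^{n^2}$ disjoint from the support of $g$, one has $\rho_n\in\mathcal{P}_2^{ac}(\mathbb{R})$, $\rho_n\to g$ weak-$*$, but $\mathcal{U}[\rho_n]\approx -n\to-\infty<\mathcal{U}[g]$. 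So the weak-$*$ claim for $\mathcal{U}$ is only meaningful under uniform second-moment bounds (which is how it is used in the paper, and what $d_2$-convergence provides). The clean fix for the $d_2$-lower semicontinuity you need is to write $\mathcal{U}[\rho]=H(\rho\,|\,M)-\tfrac12\int_{\mathbb{R}}x^2\,d\rho-\tfrac12\log(2\pi)$, where $M$ is the standard Gaussian: the relative entropy with respect to the probability measure $M$ is narrowly l.s.c. (here the duality $H(\rho|M)=\sup_{\phi\in C_b}\{\int\phi\,d\rho-\log\int e^{\phi}\,dM\}$ is legitimate), while $\int x^2\,d\rho$ is $d_2$-continuous; the paper instead simply invokes \cite[Lemma 3.4]{McCann} for this point.
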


\textbf{Proof.} The weak-$*$ lower semi-continuity of $\mathcal{U}$ is proven
in \cite[Lemma 3.4]{McCann}, which implies the $d_{2}$ lower semicontinuity.
The weak-$*$ lower semi-continuity of $\mathcal{V}$ is straightforward from
properties of weak-$*$ sequences and it is trivially continuous for the
$d_{2}$ topology.

Before starting the proof for $\mathcal{W}$, let us comment that it is
essentially contained in \cite[Lemma 3.6]{McCann}, although the author deals
with a more regular interaction potential $W(x)$. The proof is inspired from
arguments in \cite[Theorem 1.3]{Saff}. Here, we included it for completeness.
Let us consider the functional
\[
E_{0,\alpha}[\rho]:=\alpha\mathcal{V}(\rho)+\mathcal{W}(\rho)=\int
_{\mathbb{R}^{2}}R(x,y)\,\rho(x)\rho(y)\,dx\,dy\,,
\]
with $\alpha>0$ and
\[
R(x,y):=%
\begin{cases}
-\dfrac{1}{2\pi}\log\left(  |x-y|e^{-\frac{\alpha\pi(x^{2}+y^{2})}{2}}\right)
& \mbox{for }x\neq y\\[2mm]%
+\infty & \mbox{for }x=y
\end{cases}
\,.
\]
Since the function $R(x,y)\rightarrow\infty$ as $|(x,y)|\rightarrow\infty$ and
as $|x-y|\rightarrow0$, it is obviously smooth except at the diagonal and
bounded from below, then it can be approximated pointwise by an increasing
sequence of functions $R_{k}(x,y)\in C_{0}^{\infty}(\mathbb{R}\times
\mathbb{R})$ as $k\rightarrow\infty$. If $\rho_{n}\rightarrow\rho$ weak-$\ast$
as measures, then certainly the product measure $\rho_{n}\times\rho_{n}$
converges to $\rho\times\rho$ weak-$\ast$ as measures in $\mathbb{R}%
\times\mathbb{R}$. Now, define
\[
E_{0,\alpha}^{k}[\rho]:=\int_{\mathbb{R}^{2}}R_{k}(x,y)\,\rho(x)\rho
(y)\,dx\,dy\,,
\]
and note that $E_{0,\alpha}^{k}[\rho_{n}]\leq E_{0,\alpha}[\rho_{n}]$, for all
$n\in\mathbb{N}$. Then, due to the weak-$\ast$ convergence, we get
\[
E_{0,\alpha}^{k}[\rho]=\lim_{n\rightarrow\infty}E_{0,\alpha}^{k}[\rho_{n}%
]\leq\liminf_{n\rightarrow\infty}E_{0,\alpha}[\rho_{n}],
\]
for fixed $k\in\mathbb{N}$. On the other hand, by monotone convergence
$E_{0,\alpha}^{k}[\rho]\rightarrow E_{0,\alpha}[\rho]$ as $k\rightarrow\infty$
and we obtain $E_{0,\alpha}[\rho]\leq\liminf_{n\rightarrow\infty}E_{0,\alpha
}[\rho_{n}]$. The remaining statements follow from the continuity of
$\mathcal{V}$ in the $d_{2}$ topology.\hfill$\diamond$\vskip12pt

\begin{remark}
\label{domain} Let us note that the domain of the functional $\mathcal{W}$
consists only of absolutely continuous measures with respect to Lebesgue
$D(\mathcal{W})\subset\mathcal{P}_{2}^{ac}(\mathbb{R})$. This is a consequence
of the definition of $\tilde{W}$ and the fact that given a measure $\mu$ with
atomic or singular part in its Lebesgue decomposition, then $\mu\times\mu$
will charge the diagonal with positive measure. Notice that $\mathcal{P}%
_{2}\cap L^{1}\cap L^{\infty}(\mathbb{R})\subset D(\mathcal{W}).$
\end{remark}

\subsection{Minimizing the inviscid free energy functional}

The aim of this section is to make a summary about how to show the existence
of a unique minimum among all probability measures in $\mathcal{P}%
_{2}(\mathbb{R})$ to the free energy functional
$P[\rho]:=E_{0,1}[\rho]$. By Lemma \ref{weak}, we already know
that $P$ is weak-* lower semi-continuous, and then, in order to
ensure the existence of a minimum, we only need to show that the
functional is bounded from below. Uniqueness, compact support,
characterization and the explicit form of the minimum of this
functional were studied in relation to the logarithmic capacity of
sets, free probability and connections to random matrices in
\cite{Saff,HP,Voiculescu II, Voiculescu IV}. We will prove some of
them for the sake of the reader in the next proposition.

\begin{proposition}
\label{min} \textrm{\cite{Saff,HP}} Let $\vartheta:=\inf\left\{  P[\rho];
\rho\in\mathcal{P}_{2}(\mathbb{R})\right\}  $. Then:

\begin{enumerate}
\item[i)] $\vartheta$ is finite.

\item[ii)] There is a unique $\bar\rho\in\mathcal{P}_{2}^{ac}(\mathbb{R})$
such that $P[\bar\rho]=\vartheta$ with compact support.

\item[iii)] Moreover, one can characterize $\bar\rho$ as the unique measure in
$\mathcal{P}_{2}^{ac}(\mathbb{R})$ satisfying that
\[
\frac{x^{2}}{2}-\int_{\mathbb{R}}\log|x-y| \bar\rho(y)\,dy \geq C_{\bar\rho}
\]
a.e. $x\in\mathbb{R}$ with equality on supp$(\bar\rho)$ and with
\[
C_{\bar\rho}:=2\vartheta-\int_{\mathbb{R}}\frac{x^{2}}{2} \bar\rho(x)\,dx\, .
\]

\item[iv)] Furthermore, the minimum can be explicitly computed by using the
previous characterization and is given by the semicircle law, i.e., $\bar\rho$
is the absolutely continuous measure with respect to Lebesgue with density
given by
\[
\bar\rho(x)\, dx = \frac1{\pi} \sqrt{(2-x^{2})_{+}} \, dx\, .
\]

\end{enumerate}
\end{proposition}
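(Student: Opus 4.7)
My plan is to handle the four items in order, using direct-method arguments for~(i)--(ii), a first-variation computation for~(iii), and a classical explicit calculation plus the uniqueness of~(ii) for~(iv). For~(i), I would first bound $P$ from below by showing that the confinement $\mathcal{V}$ absorbs the logarithmic singularity of $\mathcal{W}$. Since $|x-y|\le(1+|x|)(1+|y|)$, one has $\log|x-y|\le\log(1+|x|)+\log(1+|y|)$, and the elementary inequality $\log(1+|x|)\le\varepsilon\,x^{2}+C_{\varepsilon}$ (for any $\varepsilon>0$) lets me integrate against $\rho\otimes\rho$ to get $\mathcal{W}[\rho]\ge -C'_{\varepsilon}-\frac{2\varepsilon}{\pi}\mathcal{V}[\rho]$. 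Choosing $\varepsilon$ small yields $P[\rho]\ge\tfrac12\mathcal{V}[\rho]-C$ and hence $\vartheta>-\infty$; the upper bound $\vartheta<+\infty$ follows by testing $P$ against any admissible measure such as a Gaussian.

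For~(ii), I would run the direct method. Given a minimizing sequence $\{\rho_{n}\}$, the coercivity estimate from~(i) bounds $\mathcal{V}[\rho_{n}]$ uniformly, so $\{\rho_{n}\}$ is tight with equibounded second moments. A weak-$*$ subsequential limit $\bar\rho$ lies in $\mathcal{P}_{2}(\mathbb{R})$ by lower semicontinuity of $\mathcal{V}$, and Lemma~\ref{weak} gives $P[\bar\rho]\le\vartheta$, hence equality. For uniqueness I would appeal to strict displacement convexity of $P$: since $V''\equiv 1$, the functional $\mathcal{V}$ is $1$-strictly displacement convex, while $\mathcal{W}$ is displacement convex thanks to the monotonicity of one-dimensional optimal transport maps in Lemma~\ref{tnondec} applied to the convex restriction of $\tilde W$ to $[0,\infty)$; the sum is therefore strictly displacement convex and has a unique minimizer. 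Compact support of $\bar\rho$ will drop out of~(iii): the characterization forces $\tfrac{x^{2}}{2}-\log|x|+O(1)\le C_{\bar\rho}$ on $\operatorname{supp}(\bar\rho)$, which fails for $|x|$ large.

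For~(iii), a first-variation argument suffices. For any $\mu\in\mathcal{P}_{2}(\mathbb{R})$ with $E_{0,1}[\mu]<+\infty$, I would plug $\rho_{\varepsilon}=(1-\varepsilon)\bar\rho+\varepsilon\mu$ into $P$ and expand to first order, obtaining
\[
\frac{d}{d\varepsilon}\bigg|_{\varepsilon=0^{+}}P[\rho_{\varepsilon}]=\int_{\mathbb{R}}\Bigl(\tfrac{x^{2}}{2}-\int_{\mathbb{R}}\log|x-y|\,\bar\rho(y)\,dy\Bigr)(d\mu-d\bar\rho)(x)\ge 0.
\]
Letting $\mu$ concentrate at an arbitrary Lebesgue point yields the pointwise inequality a.e., and equality on $\operatorname{supp}(\bar\rho)$ follows from the freedom to shuffle mass locally there. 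Integrating the equality against $\bar\rho$ and using $P[\bar\rho]=\vartheta$ recovers the stated value $C_{\bar\rho}=2\vartheta-\int\tfrac{x^{2}}{2}\,d\bar\rho$.

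For~(iv) I would verify directly that the semicircle $\bar\rho(x)=\tfrac{1}{\pi}\sqrt{(2-x^{2})_{+}}$ satisfies the characterization of~(iii), and then invoke uniqueness from~(ii) to conclude. The key identity $\int_{-\sqrt{2}}^{\sqrt{2}}\log|x-y|\,\tfrac{1}{\pi}\sqrt{2-y^{2}}\,dy=\tfrac{x^{2}}{2}-\tfrac12-\tfrac12\log 2$ for $|x|\le\sqrt{2}$ is classical in logarithmic potential theory and can be proved by differentiating in~$x$, reducing to a closed-form Hilbert transform of $\sqrt{2-y^{2}}$ on $[-\sqrt{2},\sqrt{2}]$; for $|x|>\sqrt{2}$ the strict inequality is verified by monotonicity in~$|x|$. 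The main obstacle will be precisely this closed-form computation, which is where the semicircle profile actually emerges and requires careful handling of the singular kernel; a secondary subtlety is the displacement convexity of $\mathcal{W}$ used in~(ii), which relies crucially on Lemma~\ref{tnondec} to turn the convexity of $\tilde W$ on $[0,\infty)$ into convexity along geodesics in $\mathcal{P}_{2}(\mathbb{R})$.
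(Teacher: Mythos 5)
Your proposal is correct in outline and reaches the same conclusions, but it takes a partly different route from the paper. For i)--ii) the paper bounds the joint kernel pointwise, $|x-y|e^{-(x^2+y^2)/2}\le\sqrt{2/e}<1$, so that $P>0$ immediately, and then follows the Saff--Totik restriction argument: any competitor with mass outside a fixed compact set $\Sigma_{\varepsilon}$ can be truncated and renormalized to strictly lower the energy, which simultaneously yields tightness of minimizing sequences \emph{and} compact support of the minimizer a priori, before existence. You instead prove a coercivity estimate $P[\rho]\ge\tfrac12\mathcal{V}[\rho]-C$ (your splitting $\log|x-y|\le\log(1+|x|)+\log(1+|y|)$ is fine), get tightness from the second-moment bound, and recover compact support a posteriori from the Euler--Lagrange characterization in iii); both are legitimate, and your coercivity route is arguably more elementary, while the paper's gives compactness of supports along the whole minimizing sequence for free. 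Uniqueness via strict displacement convexity ($\mathcal{V}$ is $1$-convex, $\mathcal{W}$ displacement convex by Lemma \ref{tnondec}) is exactly the paper's argument (postponed there to Proposition \ref{min2}). For iii)--iv) the paper essentially only cites \cite{Saff,HP}, whereas you sketch the first-variation computation and the explicit log-potential identity for the semicircle (your identity and the scaling from the standard semicircle on $[-2,2]$ are correct); note that, like the paper's statement of iii)--iv), you work with the convention without the $1/\pi$ factor that appears in $\tilde W$, an inconsistency inherited from the paper itself.

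Two points in your iii)--iv) plan need tightening. First, ``equality on $\operatorname{supp}(\bar\rho)$ by shuffling mass'' is loose; the clean argument is that the first variation gives $U^{\bar\rho}(x):=\tfrac{x^2}{2}-\int\log|x-y|\,\bar\rho(y)\,dy\ge C_{\bar\rho}$ a.e., while $\int U^{\bar\rho}\,d\bar\rho=C_{\bar\rho}$ forces equality $\bar\rho$-a.e., hence on the support up to the usual regularity caveats. Second, and more substantively, your step ``verify the semicircle satisfies iii) and invoke uniqueness from ii)'' has a gap as stated: the uniqueness in ii) is uniqueness of \emph{minimizers}, and your proof of iii) only shows that the minimizer satisfies the variational conditions (necessity). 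To conclude that the semicircle equals $\bar\rho$ you need the converse, i.e. that any $\sigma\in\mathcal{P}_2^{ac}(\mathbb{R})$ satisfying the characterization is a minimizer. This sufficiency follows from convexity of $P$ along \emph{linear} interpolations, which rests on the positivity of the logarithmic-energy quadratic form on zero-mean signed measures (or on the domination principle of potential theory); this is classical but not a consequence of the displacement convexity you use in ii), and it is precisely the part the paper outsources to \cite[Theorems I.1.3 and IV.5.1]{Saff} and \cite{HP}. With that ingredient added, your argument closes.
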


\textbf{Proof.} \textit{Part i):} We first show that $P[\rho]>0$ for all
$\rho\in\mathcal{P}_{2}^{ac}(\mathbb{R})$ implying that $\vartheta>-\infty$.
For this, observe that, for all $(x,y)\in\mathbb{R}^{2}$,
\[
0\leq|x-y|e^{-\frac{(x^{2}+y^{2})}{2}}\leq(|x|+|y|)e^{-\frac{\left(
|x|+|y|\right)  ^{2}}{4}}\leq\sup_{r\geq0}re^{-\frac{r^{2}}{4}}=\left(
\frac{2}{e}\right)  ^{1/2}<1.
\]
Thus,
\[
-\log\left(  |x-y|e^{-\frac{(x^{2}+y^{2})}{2}}\right)  \geq\frac{1}{2}%
\log(e/2)>0
\]
and
\begin{align}
P[\rho]  &  =\frac{1}{2}\int_{\mathbb{R}^{2}}-\log\left(  |x-y|e^{-\frac
{(x^{2}+y^{2})}{2}}\right)  d\rho(x)d\rho(y)\label{positivity}\\
&  \geq\int_{\mathbb{R}^{2}}\frac{1}{2}\log(e/2)d\rho(x)d\rho(y)=\frac{1}%
{2}\log(e/2)>0\,.\nonumber
\end{align}
Therefore, $\vartheta>0$. Choose $\rho_{12}=dx|_{_{(1,2)}}\in\mathcal{P}%
_{2}^{ac}(\mathbb{R})$, where $dx$ denotes the Lebesgue measure. Observe that
$-\log|x-y|\geq0$ on $(1,2)\times(1,2)$. Noting that $\sup\{\left\vert
2-y\right\vert ,\left\vert 1-y\right\vert \}$ $\leq1$ when $y\in(1,2)$, then
we obtain from Tonelli theorem that
\[
-\int_{\mathbb{R}^{2}}\log|x-y|\,d(\rho_{12}\times\rho_{12})=-\int
_{(1,2)}\left(  \int_{(1,2)}\log|x-y|dx\right)  dy\leq-\int_{(1,2)}\left(
\int_{(0,1)}\log zdz\right)  dy<\infty\,.
\]
We conclude that $\vartheta<+\infty$.

\textit{Part ii):} The proof follows closely \cite[Theorem I.1.3]{Saff}. We
start by showing that given a probability measure, we can always construct
another compactly supported measure that lowers the energy. First, we observe
that for a sequence $(x_{n},y_{n})_{n=1}^{\infty}$ with $\displaystyle\lim
_{n\rightarrow\infty}\left(  |x_{n}|+|y_{n}|\right)  =\infty,$ we have
\[
0\leq|x_{n}-y_{n}|e^{-\frac{(x_{n}^{2}+y_{n}^{2})}{2}}\leq\left(
|x_{n}|+|y_{n}|\right)  e^{-\frac{\left(  |x_{n}|+|y_{n}|\right)  ^{2}}{4}%
}\rightarrow0,\hspace{0.2cm}\mbox{as}\hspace{0.2cm}n\rightarrow\infty,
\]
and then
\[
\lim_{n\rightarrow\infty}\log\left[  |x_{n}-y_{n}|e^{-\frac{(x_{n}^{2}%
+y_{n}^{2})}{2}}\right]  ^{-1}=+\infty.
\]
Therefore, there exists sufficiently small $\varepsilon>0$, such that
\begin{equation}
-\log|x-y|e^{-\frac{(x^{2}+y^{2})}{2}}>2(\vartheta+1)\text{ \ \ if
\ }(x,y)\notin\Sigma_{\varepsilon}\times\Sigma_{\varepsilon}, \label{log1}%
\end{equation}
with $\Sigma_{\varepsilon}:=\{x\in\mathbb{R};$ $e^{-x^{2}/2}\geq\varepsilon\}$.

Next, we claim that if $\rho\in\mathcal{P}(\mathbb{R}),$ with supp$(\rho
)\cap(\mathbb{R}\backslash\Sigma_{\varepsilon})\neq\emptyset$ and
$P(\rho)<\vartheta+1,$ then there exists a $\tilde{\rho}\in\mathcal{P}%
(\Sigma_{\varepsilon})$ such that $P(\tilde{\rho})<P(\rho).$ Note this implies
that there exists $\varepsilon>0,$ such that
\begin{equation}
\vartheta=\inf\left\{  P(\rho);\rho\in\mathcal{P}(\Sigma_{\varepsilon
})\right\}  . \label{infff}%
\end{equation}
Thus $P(\rho)=\vartheta$ is possible only for measures $\rho$ with support in
$\Sigma_{\varepsilon}$.

Now, observe that (\ref{log1}) and \eqref{positivity} together with
$P(\rho)<\vartheta+1$ implies $\rho(\Sigma_{\varepsilon})>0.$ This allows us
to define
\[
\tilde{\rho}=\dfrac{\rho_{|_{\Sigma_{\varepsilon}}}}{\rho(\Sigma_{\varepsilon
})}\,.
\]
Moreover, we have
\begin{align*}
P[\rho] =  &  \,\frac{1}{2}\left(  \int_{\Sigma_{\varepsilon}\times
\Sigma_{\varepsilon}}-\log\left[  |x-y|e^{-\frac{x^{2}+y^{2}}{2}}\right]
\rho(x)\rho(y)dxdy\right) \\
&  +\frac{1}{2}\left(  \int_{(\Sigma_{\varepsilon}\times\Sigma_{\varepsilon
})^{c}}-\log\left[  |x-y|e^{-\frac{x^{2}+y^{2}}{2}}\right]  \rho
(x)\rho(y)dxdy\right) \\
>  &  \,\rho(\Sigma_{\varepsilon})^{2}\left(  \frac{1}{2}\int_{\Sigma
_{\varepsilon}\times\Sigma_{\varepsilon}}-\log\left[  |x-y|e^{-\frac
{x^{2}+y^{2}}{2}}\right]  \tilde{\rho}(x)\tilde{\rho}(y)dxdy\right) \\
&  +\frac{1}{2}\int_{\mathbb{R}^{2}\backslash{\Sigma_{\varepsilon}\times
\Sigma_{\varepsilon}}}2(\vartheta+1)\rho(x)\rho(y)dxdy\\
=  &  \,P[\tilde{\rho}]\rho(\Sigma_{\varepsilon})^{2}+(\vartheta
+1)(1-\rho(\Sigma_{\varepsilon})^{2}).\\
>  &  \,P[\tilde{\rho}]\rho(\Sigma_{\varepsilon})^{2}+P[\rho](1-\rho
(\Sigma_{\varepsilon})^{2}),
\end{align*}
since $P(\rho)<\vartheta+1$. Hence, $P[\rho]>P[\tilde{\rho}]$ and the claim follows.

As a consequence of \eqref{infff}, if $\rho$ is a minimum for $P$, then $\rho$
has compact support in $\Sigma_{\varepsilon}$, and thus $\rho\in
\mathcal{P}_{2}(\mathbb{R})$.

A standard argument in calculus of variations now shows that the minimum is
attained in the set $\mathcal{P}_{2}(\mathbb{R})$. By definition of
$\vartheta$ and \eqref{infff}, there exists a minimizing sequence, i.e.,
$\{\rho_{n}\}\subseteq\mathcal{P}_{2}(\mathbb{R})$ with $P[\rho_{n}%
]\rightarrow\vartheta$ as $n\rightarrow\infty$ with supp$(\rho_{n}%
)\subset\Sigma_{\varepsilon}$ for all $n\in\mathbb{N}$. Note that each
$\rho_{n}$ has support in the compact $\Sigma_{\varepsilon}$, and then we have
that the minimizing sequence of measures is tight in the weak convergence of
measures. Therefore, we can select from $\{\rho_{n}\}_{n\in\mathbb{N}}$ a
weak$^{\ast}$ convergent subsequence and without loss of generality, we can
assume that $\{\rho_{n}\}_{n\in\mathbb{N}}$ itself converges to $\rho
\in\mathcal{P}_{2}(\mathbb{R})$ in the weak$^{\ast}$ topology of measures and
in the $d_{2}$ sense. Therefore, from weak$-\ast$ semi-continuity of $P$, we
have
\[
\vartheta\leq P[\rho]\leq\lim\inf_{n\rightarrow\infty}P[\rho_{n}]=\vartheta
\]
and thus $P[\rho]=\vartheta$. The absolutely continuity of the minimum is a
direct consequence of the Remark \ref{domain} since $D(\mathcal{W}%
)\subset\mathcal{P}_{2}^{ac}(\mathbb{R})$.

The uniqueness of the minimum is proven in \cite[Theorem I.1.3]{Saff}.
However, in our context this will be clear later on from convexity properties,
so we postpone this discussion.

\textit{Parts iii) and iv):} The characterization of the minimum is due to the
Euler-Lagrange equations of the variational problem with the mass constraint.
However, since the minimum has compact support, then one obtains this
variational inequality outside its support. Moreover, this characterization
allows to find explicitly the minimum given by the absolutely continuous
measure with density defining the semicircle law in (iv). We refer for all
details to \cite[Theorem I.1.3 and Theorem IV.5.1]{Saff} and \cite{HP} since
it is a well-known fact in free probability and logarithmic capacity.
\hfill$\diamond$\vskip12pt

\begin{remark}
An important consequence of this result is that there exists a unique
compactly supported stationary solution of problem \eqref{eq4.1} in
$\mathcal{P}_{2}(\mathbb{R})$ explicitly given by the semicircle law
\textrm{\cite{HP}} or the Barenblatt-Pattle profile for $m=3$ of nonlinear
diffusions \textrm{\cite{CT}}. Therefore, using the self-similar change of
variables \eqref{change}, the problem \eqref{eq3} admits a unique, up to
invariance and translations, global in time self-similar solution with initial
data in $\mathcal{P}_{2}(\mathbb{R})$. This is already obtained and studied in
\textrm{\cite{Deslippe,Biler-Karch}}.
\end{remark}

\subsection{The viscous case $\kappa>0$}

In this subsection, we are concerned with the study of the functional
$E_{\kappa,\alpha}$ for $\kappa>0$. Our intent is to show that functional
reaches a unique minimum point on $\mathcal{P}_{2}^{ac}(\mathbb{R})$. As we
already discussed before, a suitable notion of convexity of the functional in
the set of measures will be very important in this case. Next, we recall the
definition of convexity along generalized geodesics of a functional
$E:\mathcal{P}_{2}(\mathbb{R})\longrightarrow\mathbb{R}$.

\begin{definition}
\textrm{\cite{Ambrosio}} A generalized geodesic connecting $\rho$ to $\mu$
(with base in $\nu$ and induced by $\gamma)$ is a curve of the type
$g_{t}=\left(  \pi_{t}^{2\rightarrow3}\right)  \sharp\gamma,$ $t\in
\lbrack0,1],$ where $\gamma\in\Gamma(\nu,\rho,\mu)$, $\pi_{\sharp}^{1,2}%
\gamma\in\Gamma_{0}(\nu,\rho)$, $\pi_{\sharp}^{1,3}\gamma\in\Gamma_{0}(\nu
,\mu)$, and $\pi_{t}^{2\rightarrow3}=(1-t)\pi_{2}+t\pi_{3}$.
\end{definition}

In particular, when dealing with absolutely continuous measures $\rho,\mu
,\nu\in\mathcal{P}_{2}^{ac}(\mathbb{R})$ and with $\rho=\nu$, $g_{t}:=\left[
(1-t)Id+tT_{\rho}^{\mu}\right]  _{\sharp}\rho$ is a generalized geodesic
connecting $\rho$ to $\mu.$ In this case, we call $g_{t}$ the displacement
interpolation between $\rho$ and $\mu.$

\begin{definition}
\textrm{\cite{Ambrosio}} A functional $E:\mathcal{P}_{2}(\mathbb{R}%
)\rightarrow(-\infty,+\infty]$ is $\lambda-$convex along generalized geodesics
(a.g.g. by shorten) if for every $\nu,\rho,\mu\in D(E):=\{\mu\in
\mathcal{P}_{2}(\mathbb{R});E[\mu]<\infty\}\subset\mathcal{P}_{2}(\mathbb{R})$
and for every generalized geodesic $g_{t}$ connecting $\rho$ to $\mu$ induced
by a plan $\gamma\in\Gamma(\nu,\rho,\mu)$, the following inequality holds:
\[
E[g_{t}]\leq(1-t)E[\rho]+tE[\mu]-\frac{\lambda}{2}t(1-t)\,d_{\gamma}^{2}%
(\rho,\mu),
\]
where
\[
d_{\gamma}^{2}(\rho,\mu):=\int_{\mathbb{R}^{3}}\left\vert x_{3}-x_{2}%
\right\vert ^{2}d\gamma(x_{1},x_{2},x_{3})\geq d_{2}^{2}(\rho,\mu)\,.
\]
If $g_{t}$ is the displacement interpolation and $\lambda=0$, we say that the
functional $E$ is displacement convex as originally introduced in
\textrm{\cite{McCann}}.
\end{definition}

We readily apply these notions of convexity to our functional $E_{\kappa
,\alpha}$.

\begin{proposition}
Let $\kappa,\alpha\geq0$. The functional $E_{\kappa,\alpha}$ defined by
\eqref{func3} is $\alpha-$convex along generalized geodesics.
\end{proposition}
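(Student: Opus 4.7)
The plan is to split $E_{\kappa,\alpha}=\kappa\mathcal{U}+\alpha\mathcal{V}+\mathcal{W}$ and check that each summand has the expected convexity along generalized geodesics; the result then follows by summation since $\kappa,\alpha\geq 0$. I fix $\nu,\rho,\mu\in D(E_{\kappa,\alpha})\subset\mathcal{P}_{2}^{ac}(\mathbb{R})$. Since the $(1,2)$- and $(1,3)$-marginals of $\gamma\in\Gamma(\nu,\rho,\mu)$ are optimal two-plans and $\nu$ is atomless, $\gamma$ is concentrated on the graph of $(\mathrm{id},T,S)$ with $T:=T_{\nu}^{\rho}$ and $S:=T_{\nu}^{\mu}$, both essentially increasing by Lemma \ref{tnondec}. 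Consequently the map $(1-t)T+tS$ is also essentially increasing, so $g_{t}\in\mathcal{P}_{2}^{ac}(\mathbb{R})$ for every $t\in[0,1]$ and $E_{\kappa,\alpha}[g_{t}]$ is given by the explicit integral formula.

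For $\alpha\mathcal{V}$ the argument is immediate: $V(x)=x^{2}/2$ is uniformly $1$-convex, so integrating the pointwise inequality
\[
V\bigl((1-t)x_{2}+tx_{3}\bigr)\leq(1-t)V(x_{2})+tV(x_{3})-\tfrac{1}{2}t(1-t)|x_{3}-x_{2}|^{2}
\]
against $d\gamma$ yields $\alpha$-convexity a.g.g.\ of $\alpha\mathcal{V}$ with the correct modulus. For $\kappa\mathcal{U}$, the density $U(s)=s\log s$ satisfies the dilation condition \eqref{condU}, and the standard McCann/AGS argument (see \cite{Ambrosio}) yields $0$-convexity of $\mathcal{U}$ along generalized geodesics; in one dimension this is especially transparent, since $g_{t}$ has its density obtained by change of variables through the monotone map $(1-t)T+tS$.

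The principal step, and the only genuinely new content, is the $0$-convexity a.g.g.\ of $\mathcal{W}$. The obstacle is that the extended real function $\tilde{W}$ is not convex on the whole line: a convex combination of a negative and a positive point can land on the singularity at the origin. The resolution is precisely the one-dimensional monotonicity of optimal maps. For $\nu\otimes\nu$-a.e.\ $(x_{1},y_{1})$ the two differences $T(x_{1})-T(y_{1})$ and $S(x_{1})-S(y_{1})$ share the sign of $x_{1}-y_{1}$, hence
\[
\bigl|(1-t)(T(x_{1})-T(y_{1}))+t(S(x_{1})-S(y_{1}))\bigr|=(1-t)|T(x_{1})-T(y_{1})|+t|S(x_{1})-S(y_{1})|.
\]
The argument of $\tilde{W}$ inside $\mathcal{W}[g_{t}]$ therefore stays on a single half-line along the geodesic, the singularity at the origin is never activated, and only the convexity of $s\mapsto-\log s$ on $(0,\infty)$ is needed. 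Applying this convexity pointwise and integrating against $d\nu(x_{1})\,d\nu(y_{1})$, then using $T_{\sharp}\nu=\rho$ and $S_{\sharp}\nu=\mu$, gives $\mathcal{W}[g_{t}]\leq(1-t)\mathcal{W}[\rho]+t\mathcal{W}[\mu]$, which is $0$-convexity a.g.g. Summing the three inequalities closes the proof.
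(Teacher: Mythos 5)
Your proof is correct and follows essentially the same route as the paper: reduce to the interaction term $\mathcal{W}$ (citing the known $0$- and $1$-convexity a.g.g.\ of $\mathcal{U}$ and $\mathcal{V}$), write the generalized geodesic as $\bigl((1-t)T_{\nu}^{\rho}+tT_{\nu}^{\mu}\bigr)_{\sharp}\nu$, and use the essential monotonicity of the two optimal maps from Lemma \ref{tnondec} to see that the interpolated differences never reach the singularity, so only the convexity of $-\log$ on a half-line is needed; your direct same-sign observation is just a cleaner phrasing of the paper's contradiction argument for its claim (C). The one point you state without proof, that $g_{t}$ stays in $\mathcal{P}_{2}^{ac}(\mathbb{R})$, is asserted equally briefly in the paper, so there is no gap relative to the published argument.
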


\textbf{Proof.} Following the notation in \eqref{func3}, we can reduce
ourselves to show that the functional $\mathcal{W}$ is convex along
generalized geodesics in $D(E_{\kappa,\alpha})\subset\mathcal{P}_{2}%
^{ac}(\mathbb{R})$. In fact, it is well-known that $\mathcal{U}$ is convex
($\lambda=0)$ along generalized geodesics, and that the functional
$\mathcal{V}$ is $1-$ convex along generalized geodesics in $D(E_{\kappa
,\alpha})\subset\mathcal{P}_{2}^{ac}(\mathbb{R})$, see \cite{Ambrosio,McCann}
for details.

Let $\rho,\mu,\nu\in$ $D(E_{\kappa,\alpha})\subset\mathcal{P}_{2}%
^{ac}(\mathbb{R})$ and $g_{t}$ be a generalized geodesic connecting $\rho$ to
$\mu$ with base point $\nu.$ As we are working on the real line $\mathbb{R},$
we can express the generalized geodesics as
\[
g_{t}=((1-t)T_{\nu}^{\rho}+tT_{\nu}^{\mu})_{\#}\nu,
\]
where $T_{\nu}^{\rho}$ and $T_{\nu}^{\mu}$ are the optimal transport between
$\nu$ and $\rho$, and $\nu$ and $\mu$ respectively, with the properties in
Lemma \ref{tnondec}.

Let us observe that by definition of $g_{t}$ and its absolute continuity with
respect to Lebesgue, we get
\begin{align*}
\mathcal{W}[g_{t}]  &  =\int_{\mathbb{R}^{2}}-\log(|x-y|)\,d(g_{t}\times
g_{t})\\
&  =\int_{\mathbb{R}^{2}}-\log(|(1-t)(T_{\nu}^{\rho}(x)-T_{\nu}^{\rho
}(y))+t(T_{\nu}^{\mu}(x)-T_{\nu}^{\mu}(y))|)\,d(\nu\times\nu)\\
&  \leq(1-t)\int_{\mathbb{R}^{2}}-\log(|(T_{\nu}^{\rho}(x)-T_{\nu}^{\rho
}(y))|)\,d(\nu\times\nu)+t\int_{\mathbb{R}^{2}}-\log(|(T_{\nu}^{\mu}%
(x)-T_{\nu}^{\mu}(y))|)\,d(\nu\times\nu)\\
&  =(1-t)\mathcal{W}[\rho]+t\mathcal{W}[\mu],
\end{align*}
where the last step follows from convexity of the function $-\log|x|$. This
last step is fully rigorous provided the following claim (C) holds: there
exists a $\nu$-null set $A$ such that
\[
(1-t)(T_{\nu}^{\rho}(x)-T_{\nu}^{\rho}(y))+t(T_{\nu}^{\mu}(x)-T_{\nu}^{\mu
}(y))\neq0,
\]
for all $t\in\lbrack0,1],$ $x,y\in A^{c}$ and $x\neq y$. In other words, the
interpolation map reaches the logarithmic singularity only if $x=y$ or in a
$\nu\times\nu$-null set.

In order to prove this claim, we remind that the optimal transport on the real
line between two measures in $\mathcal{P}_{2}^{ac}(\mathbb{R})$ is essentially
increasing, see Lemma \ref{tnondec}. Now, let $A$ be a $\nu$-null set such
that $T_{\nu}^{\mu}$ and $T_{\nu}^{\rho}$ are increasing in $A^{c}.$ If
$\ x,y\in A^{c}$ and $x\neq y$ then let us show that
\[
(1-t)(T_{\nu}^{\rho}(x)-T_{\nu}^{\rho}(y))+t(T_{\nu}^{\mu}(x)-T_{\nu}^{\mu
}(y))\neq0,\text{ }\forall t\in\lbrack0,1].
\]
To prove this, suppose that $\exists$ $t_{0}\in(0,1]$, $x,y\in A^{c}$ and
$x\neq y$ such that
\begin{equation}
(1-t_{0})(T_{\nu}^{\rho}(x)-T_{\nu}^{\rho}(y))+t_{0}(T_{\nu}^{\mu}(x)-T_{\nu
}^{\mu}(y))=0\,, \label{aux1212}
\end{equation}
then we deduce
\[
\frac{(T_{\nu}^{\mu}(x)-T_{\nu}^{\mu}(y))}{(T_{\nu}^{\rho}(x)-T_{\nu}^{\rho
}(y))}=\frac{t_{0}-1}{t_{0}}\leq0,
\]
which provides a contradiction, because the optimal transport maps $T_{\nu
}^{\mu}$ and $T_{\nu}^{\rho}$ are increasing in $A^{c}$. For the case
$t_{0}=0$ in (\ref{aux1212}), we have $T_{\nu}^{\rho}(x)-T_{\nu}^{\rho}(y)=0$
which yields $x=y$ or $x,y\in A,$ because of essentially injectivity of
$T_{\nu}^{\rho}$. This finally shows the claim (C), and thus that
$\mathcal{W}[g_{t}]$ is a convex function in $t\in\lbrack0,1]$ for all
generalized geodesics corresponding to absolutely continuous measures, which
gives by definition the convexity of $\mathcal{W}$ along generalized geodesics
in $D(E_{\kappa,\alpha})$. \hfill$\diamond$\vskip12pt

\begin{proposition}
\label{min2} Let $\kappa\geq0$, $\alpha>0$, and $\vartheta_{\kappa,\alpha
}:=\inf\left\{  E_{\kappa,\alpha}(\rho);\rho\in\mathcal{P}_{2}(\mathbb{R}%
)\right\}  $. Then:

\begin{enumerate}
\item[i)] $\vartheta_{\kappa,\alpha}$ is finite.

\item[ii)] There is a unique $\bar\rho_{\kappa,\alpha}\in\mathcal{P}_{2}^{ac}
(\mathbb{R})$ such that $E_{\kappa,\alpha}[\bar\rho_{\kappa,\alpha}%
]=\vartheta_{\kappa,\alpha}$.
\end{enumerate}
\end{proposition}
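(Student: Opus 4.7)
The plan is to apply the direct method of the calculus of variations, with uniqueness forced by the strict convexity furnished by the preceding proposition. For part (i), the upper bound $\vartheta_{\kappa,\alpha}<+\infty$ is immediate: test $E_{\kappa,\alpha}$ on any bounded compactly supported density such as the $\rho_{12}$ from the proof of Proposition~\ref{min}, where all three of $\mathcal{U},\mathcal{V},\mathcal{W}$ are manifestly finite. The real work is the lower bound $\vartheta_{\kappa,\alpha}>-\infty$, and my strategy is to dominate the (possibly very negative) entropy by a fraction of the quadratic confinement. Applying the relative-entropy inequality $\int\rho\log(\rho/G_{\sigma})\geq 0$ to the centered Gaussian $G_{\sigma}$ of variance $\sigma^{2}=2\kappa/\alpha$ yields
$$\kappa\,\mathcal{U}[\rho]\,\geq\,-\tfrac{\kappa}{2}\log(4\pi\kappa/\alpha)-\tfrac{\alpha}{2}\mathcal{V}[\rho],$$
so that $E_{\kappa,\alpha}[\rho]\geq E_{0,\alpha/2}[\rho]+C(\kappa,\alpha)$. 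Rerunning the exponential-weight computation used in Proposition~\ref{min} with $\alpha/2$ in place of $1$, and using the elementary bound $r\,e^{-\pi\alpha r^{2}/8}\leq 2/\sqrt{\pi\alpha e}$ to control $|x-y|e^{-\pi\alpha(x^{2}+y^{2})/4}$ pointwise, produces an explicit finite lower bound for $E_{0,\alpha/2}[\rho]$, finishing (i).

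For existence in (ii), a minimizing sequence $\{\rho_{n}\}\subset\mathcal{P}_{2}^{ac}(\mathbb{R})$ satisfies, after rearranging the above chain of inequalities, $\tfrac{\alpha}{2}\mathcal{V}[\rho_{n}]\leq E_{\kappa,\alpha}[\rho_{n}]-\mathcal{W}[\rho_{n}]-C$; the positivity argument in Proposition~\ref{min} bounds $\mathcal{W}[\rho_{n}]$ from below, so $\mathcal{V}[\rho_{n}]$ is uniformly bounded. Prokhorov's theorem then extracts a weak-$*$ cluster point $\bar\rho_{\kappa,\alpha}\in\mathcal{P}_{2}(\mathbb{R})$. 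Since $\alpha>0$, Lemma~\ref{weak} tells us that $\kappa\,\mathcal{U}+E_{0,\alpha}=E_{\kappa,\alpha}$ is weak-$*$ lower semi-continuous, so $E_{\kappa,\alpha}[\bar\rho_{\kappa,\alpha}]\leq\vartheta_{\kappa,\alpha}$, and Remark~\ref{domain} automatically upgrades $\bar\rho_{\kappa,\alpha}$ into $\mathcal{P}_{2}^{ac}(\mathbb{R})$ because finiteness of $\mathcal{W}$ rules out atomic or singular parts.

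Uniqueness is where the assumption $\alpha>0$ pays off, and is handled by the preceding proposition: $E_{\kappa,\alpha}$ is $\alpha$-convex along generalized geodesics. If $\bar\rho^{1}\neq\bar\rho^{2}$ were two distinct minimizers, connecting them by the displacement interpolation $g_{t}=[(1-t)\mathrm{Id}+tT_{\bar\rho^{1}}^{\bar\rho^{2}}]_{\sharp}\bar\rho^{1}$ and invoking the convexity inequality gives
$$E_{\kappa,\alpha}[g_{t}]\,\leq\,\vartheta_{\kappa,\alpha}-\tfrac{\alpha}{2}t(1-t)\,d_{2}^{2}(\bar\rho^{1},\bar\rho^{2})\,<\,\vartheta_{\kappa,\alpha}$$
for any $t\in(0,1)$, contradicting the definition of $\vartheta_{\kappa,\alpha}$. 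The main technical obstacle I anticipate is the quantitative lower bound in (i): the entropy $\kappa\,\mathcal{U}$ is unbounded below on $\mathcal{P}_{2}^{ac}(\mathbb{R})$ and must be tamed by carefully tuning the comparison Gaussian's variance to the confinement strength $\alpha$; everything else is routine once the earlier semi-continuity (Lemma~\ref{weak}) and $\alpha$-convexity are at hand.
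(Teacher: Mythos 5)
Your strategy is essentially the paper's: the same splitting $E_{\kappa,\alpha}=E_{0,\alpha/2}+\bigl(\kappa\,\mathcal{U}+\tfrac{\alpha}{2}\mathcal{V}\bigr)$, with the Gaussian relative-entropy (Jensen) bound taming the entropy and the pointwise exponential-weight estimate bounding $E_{0,\alpha/2}$ from below for part (i); weak-$*$ lower semicontinuity from Lemma \ref{weak} plus Remark \ref{domain} for existence and absolute continuity of the minimizer; and strict $\alpha$-convexity along (generalized) geodesics for uniqueness, exactly as in the paper. The one structural difference is the compactness step: the paper extracts a minimizer using weak $L^{1}$ compactness of minimizing sequences (via $\kappa>0$, as in \cite{Carrillo2}), whereas you use tightness from uniform second moments and Prokhorov; that route is legitimate and in fact also covers $\kappa=0$ directly.

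However, your justification of the uniform second-moment bound contains a genuine error: you assert that ``the positivity argument in Proposition \ref{min} bounds $\mathcal{W}[\rho_{n}]$ from below.'' That argument bounds the \emph{combination} of confinement and interaction (i.e.\ $E_{0,1}$, or $E_{0,\alpha/2}$ after rescaling), not $\mathcal{W}$ alone, and indeed $\mathcal{W}$ by itself is unbounded below on $\mathcal{P}_{2}^{ac}(\mathbb{R})$: for $\rho_{R}$ uniform on $(-R,R)$ one has $\mathcal{W}[\rho_{R}]=-\tfrac{1}{2\pi}\log R+O(1)\to-\infty$. Consequently the inequality $\tfrac{\alpha}{2}\mathcal{V}[\rho_{n}]\leq E_{\kappa,\alpha}[\rho_{n}]-\mathcal{W}[\rho_{n}]-C$ does not, as written, give a bound on $\mathcal{V}[\rho_{n}]$. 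The repair is the very trick you used in part (i): reserve a fraction of the confinement for the interaction, e.g.\ $E_{\kappa,\alpha}\geq\bigl(\kappa\,\mathcal{U}+\tfrac{\alpha}{2}\mathcal{V}\bigr)+\bigl(\tfrac{\alpha}{4}\mathcal{V}+\mathcal{W}\bigr)+\tfrac{\alpha}{4}\mathcal{V}\geq C_{1}+C_{2}+\tfrac{\alpha}{4}\mathcal{V}$, where $C_{2}$ comes from the same exponential-weight estimate applied to $\tfrac{\alpha}{4}\mathcal{V}+\mathcal{W}$. With that correction your minimizing sequence has uniformly bounded second moments, and the remainder of your argument (Prokhorov, weak-$*$ lower semicontinuity of $\kappa\,\mathcal{U}+E_{0,\alpha}$ from Lemma \ref{weak}, Remark \ref{domain} for absolute continuity, and the strict-convexity uniqueness, identical to the paper's) goes through.
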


\textbf{Proof. } This results is proven for $\kappa=0$ in Proposition
\ref{min}, except the uniqueness part. Let us assume from now that $\kappa>0$.

\emph{Part i):} Recalling the definition of the functional $E_{\kappa,\alpha}$
in terms of $\mathcal{U}$, $\mathcal{V}$, and $\mathcal{W}$ we split
\[
E_{\kappa,\alpha}=E_{0,\alpha/2}+\kappa\mathcal{U}+\frac{\alpha}{2}%
\mathcal{V}\,.
\]
It is straightforward to use Jensen's inequality to show that
\[
\kappa\mathcal{U}[\rho]+\frac{\alpha}{2}\mathcal{V}[\rho]=\kappa
\int_{\mathbb{R}}\frac{\rho}{e^{-\delta x^{2}/2}}\log\left(  \frac{\rho
}{e^{-\delta x^{2}/2}}\right)  e^{-\delta x^{2}/2}dx\geq\frac{\kappa}{2}%
(\log\delta-\log(2\pi)),\text{ }\forall\rho\in\mathcal{P}_{2}^{ac}%
(\mathbb{R})
\]
with $\delta=\alpha/2\kappa$. Proceeding analogously to the proof of
\textit{Part i)} of Proposition \ref{min}, we obtain
\[
0\leq|x-y|e^{-\alpha\frac{(x^{2}+y^{2})}{4}}\leq(|x|+|y|)e^{-\alpha
\frac{\left(  |x|+|y|\right)  ^{2}}{8}}\leq\sup_{r>0}re^{-\alpha\frac{r^{2}%
}{8}}=\left(  \frac{4}{\alpha e}\right)  ^{1/2}\text{ ,}%
\]
and
\begin{equation}
E_{0,\alpha/2}[\rho]=\int_{\mathbb{R}^{2}}-\log\left(  |x-y|e^{-\alpha
\frac{(x^{2}+y^{2})}{4}}\right)  d\rho(x)d\rho(y)\geq-\frac{1}{2}\log\left(
\frac{4}{\alpha e}\right)  . \label{unifBaixo}%
\end{equation}
Therefore, the functional $E_{\kappa,\alpha}$ is bounded from below, and so
$\vartheta_{\kappa,\alpha}>-\infty$. Since the domain $D(E_{\kappa,\alpha
})\neq\varnothing,$ there is $\rho\in\mathcal{P}_{2}(\mathbb{R})$ such that
$E_{\kappa,\alpha}(\rho)<\infty,$ and therefore the infimum $\vartheta
_{\kappa,\alpha}$ is finite.

\emph{Part ii):} From Lemma \ref{weak}, $E_{\kappa,\alpha}$ is weak-$\ast$
semi-continuous in $\mathcal{P}_{2}^{ac}(\mathbb{R})$, which shows that the
infimum is achieved at some point $\bar{\rho}\in\mathcal{P}_{2}^{ac}%
(\mathbb{R})$. In fact, it is easy to check based on the same arguments for
\textit{Part i) and }Proposition \ref{min} that any minimizing sequence is
weakly compact in $L^{1}(\mathbb{R})$, see similar arguments in
\cite{Carrillo2}, since $\kappa>0$.

The uniqueness claim for $\kappa\geq0$ follows from the strict displacement
convexity of $E_{\kappa,\alpha}$. Indeed, let $\rho_{1}$ and $\rho_{2}$ be two
different minimum in $\mathcal{P}_{2}^{ac}(\mathbb{R})$ to $E_{\kappa,\alpha}$
and consider $\rho_{\frac{1}{2}}$ the displacement interpolation between
$\rho_{1}$ and $\rho_{2}$ at $t=1/2$. By the $\alpha$-displacement convexity
of $E_{\kappa,\alpha}$, we have
\[
\vartheta_{\kappa,\alpha}\leq E_{\kappa,\alpha}[\rho_{\frac{1}{2}}]<\frac
{1}{2}E_{\kappa,\alpha}[\rho_{1}]+\frac{1}{2}E_{\kappa,\alpha}[\rho
_{2}]=\vartheta_{\kappa,\alpha},
\]
which provides a contradiction. Therefore, there exists a unique minimum of
$E_{\kappa,\alpha}$. \hfill$\diamond$\vskip12pt


\section{Well-posedness, asymptotic behavior and inviscid limit}

As pointed out in the introduction we will obtain solutions for (\ref{eq4.1})
as the limit of a Euler approximation scheme in probability space
$\mathcal{P}_{2}(\mathbb{R})$. More precisely, consider a step time $\tau>0$
and a initial data $\rho_{0}\in\mathcal{P}_{2}(\mathbb{R})$. We define, for a
fixed $\mu,$ the functional $\mathcal{I}(\tau,\mu,.):\mathcal{P}%
_{2}(\mathbb{R})\rightarrow(-\infty,\infty]$ as
\[
\mathcal{I}(\tau,\mu;\rho):=\frac{1}{2\tau}d_{2}^{2}(\mu,\rho)+E[\rho].
\]
Formally, we define the following recursive sequence $(\rho_{\tau}^{n}%
)_{n\in\mathbb{N}}$:
\begin{align}
\rho_{\tau}^{0}  &  :=\rho_{0}\label{v1}\\
\rho_{\tau}^{n}  &  =\min_{\rho\in\mathcal{P}_{2}(\mathbb{R})}I(\tau
,\rho_{\tau}^{n-1};\rho),\text{ \ \ }n\in\mathbb{N}, \label{v2}%
\end{align}
which can be seen as the discrete approximate Euler solution to the gradient
flux equation
\[
\frac{\partial\rho}{\partial t}=-\nabla E[\rho]\text{, \ \ \ }t>0
\]
in the metric space $(\mathcal{P}_{2}(\mathbb{R}),$ $d_{2}).$ More precisely,
one calls a \textit{discrete solution}, the curve $\rho_{t}^{\tau}$ obtained
as the time interpolation of the discrete scheme (\ref{v1})-(\ref{v2})
connecting every pair $(\rho_{\tau}^{n-1},\rho_{\tau}^{n})$ with a velocity
constant geodesic in $t\in\lbrack(n-1)\tau,n\tau)$, see \cite{Ambrosio}.

\subsection{Gradient flows}

Below we remember the definition of a gradient flow solution.

\begin{definition}
\label{gradflow} We say that a map $\rho_{t}\in AC_{loc}^{2}((0,\infty
);\mathcal{P}_{2}(\mathbb{R})$\thinspace$)$ is a solution of the gradient flow
equation
\[
-v_{t}\in\partial E(\rho_{t}),\text{ \ \ }t>0,
\]
if \ $-v_{t}\in Tan_{\rho_{t}}\mathcal{P}_{2}(\mathbb{R})$ belongs to the
subdifferential of $E$ at $\rho_{t},$ a.e. $t>0.$
\end{definition}

\bigskip

It is known that $\rho_{t}$ being a gradient flow in $\mathcal{P}%
_{2}(\mathbb{R})$ is equivalent to the existence of a velocity vector field
$-v_{t}\in Tan_{\rho_{t}}\mathcal{P}_{2}(\mathbb{R})\cap$ $\partial E(\rho
_{t})$ a.e. $t>0$, such that $\left\Vert v_{t}\right\Vert _{L_{\rho_{t}}%
^{2}(\mathbb{R})}\in L_{loc}^{2}(0,\infty)$ and the continuity equation holds
in the distribution sense:
\begin{equation}
\frac{\partial(\rho_{t})}{\partial t}+\nabla\cdot(\rho_{t}v_{t})=0\text{
\ \ \ \ in \ \ \ }\mathbb{R}\times\mathbb{(}0,\infty). \label{partdist}%
\end{equation}
The next theorem ensures the existence of a gradient flow solution for the
free energy functional $E_{\kappa,\alpha}$ as in (\ref{func3}).

\begin{thm}
\label{teo1} Let $\kappa,\alpha\geq0,$ $\rho_{0}\in\mathcal{P}_{2}%
(\mathbb{R})$ and the functional $E_{\kappa,\alpha}$. The following assertions hold:

\begin{enumerate}
\item \textbf{(Existence and Uniqueness)} The discrete solution $\rho
_{t}^{\tau}$ converges locally uniformly to a locally Lipschitz curve
$\rho_{t}:=S_{t}[\rho_{0}]$ in $\mathcal{P}_{2}(\mathbb{R})$ which is the
unique gradient flow of $E_{\kappa,\alpha}$ with $\lim_{t\rightarrow0+}%
\rho_{t}=\rho_{0}$. Moreover, the curve lies in $\mathcal{P}_{2}%
^{ac}(\mathbb{R})$, for all $t>0.$

\item \textbf{(Contractive semigroup)} The map $t\mapsto S_{t}[\rho_{0}]$ for
all $\alpha\geq0$ is a $\alpha$-contracting semigroup on $\mathcal{P}%
_{2}(\mathbb{R})$, i.e.
\[
d_{2}(S_{t}[\rho_{0}],S_{t}[\mu_{0}])\leq e^{-\alpha t}\,d_{2}(\rho_{0}%
,\mu_{0})\qquad\text{ for all }\rho_{0},\mu_{0}\in\mathcal{P}_{2}%
(\mathbb{R})\,.
\]

\item \textbf{(Asymptotic behavior) } Let $\alpha>0$ and let us denote by
$\bar\rho_{\kappa,\alpha}$ the unique minimum of $E_{\kappa,\alpha}$. Then for
all $0<t_{0}<t<\infty$, we have
\[
d_{2}(\rho_{t},\overline{\rho}_{\kappa,\alpha}) \leq e^{-\alpha(t-t_{0}%
)}\,d_{2} (\rho_{t_{0}},\overline{\rho}_{\kappa,\alpha})
\]
and
\[
E_{\kappa,\alpha}[\rho_{t}]-E_{\kappa,\alpha}[\overline{\rho}_{\kappa,\alpha}]
\leq e^{-2\alpha(t-t_{0})}(E_{\kappa,\alpha}[\rho_{t_{0}}]-E_{\kappa,\alpha
}[\overline{\rho}_{\kappa,\alpha}]) \, .
\]

\item \textbf{(Free Energy identity)} The solution $\rho_{t}:=S_{t}[\rho_{0}]$
is a curve of maximal slope and it satisfies the identity:
\[
E_{\kappa,\alpha}[\rho_{t}] = E_{\kappa,\alpha}[\rho_{s}] +\int_{s}^{t}
\int_{\mathbb{R}} |v_{\tau}(x)|^{2} dx d\tau
\]
for all $0\leq s\leq t$, where $v_{t} \in L^{2}_{loc}(0,\infty;L^{2}_{\rho
_{t}}(\mathbb{R}))$ is the associated velocity field satisfying
\eqref{partdist} in Definition \textrm{\ref{gradflow}}.
\end{enumerate}
\end{thm}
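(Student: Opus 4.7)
The plan is to apply the abstract gradient flow machinery of Ambrosio--Gigli--Savar\'e directly to $E_{\kappa,\alpha}$, since Section~2 has in effect verified all the hypotheses of \cite[Chapter 11]{Ambrosio}. Namely, Lemma~\ref{weak} gives lower semi-continuity with respect to $d_2$; the preceding proposition provides $\alpha$-convexity along generalized geodesics; the lower bounds computed in Proposition~\ref{min2} (or, when $\alpha=0$, the trivial fact that $\mathcal{V}$ is not present but the second moment of $\rho$ is controlled by $d_2^2(\rho,\mu)$ in the JKO step) yield coercivity of the penalized functional $\mathcal{I}(\tau,\mu;\cdot)$; and Remark~\ref{domain} ensures $D(E_{\kappa,\alpha})\neq\emptyset$. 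The crucial point, distinguishing our setting from the standard one, is that the convexity along generalized geodesics has already been established \emph{despite} the singularity of $\tilde W$, and that the effective domain is contained in $\mathcal{P}_2^{ac}(\mathbb{R})$.

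\textbf{Part (1).} With these ingredients, the direct method proves that each JKO step \eqref{v2} admits a minimizer (use lower semi-continuity together with weak-$\ast$ compactness of $d_2$-bounded sequences via the second-moment bound). Then \cite[Theorem 4.0.4 and Theorem 11.2.1]{Ambrosio} guarantee that the piecewise-geodesic interpolant $\rho^\tau_t$ converges locally uniformly in $d_2$ to a locally Lipschitz curve $\rho_t\in AC^2_{\mathrm{loc}}((0,\infty);\mathcal{P}_2(\mathbb{R}))$, which is the unique gradient flow of $E_{\kappa,\alpha}$ starting at $\rho_0$; uniqueness and strong continuity at $t=0$ are a consequence of the convexity a.g.g. (no extra hypothesis needed for $\alpha=0$). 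For $t>0$, $E_{\kappa,\alpha}[\rho_t]<\infty$, so by Remark~\ref{domain} we have $\rho_t\in\mathcal{P}_2^{ac}(\mathbb{R})$.

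\textbf{Parts (2) and (3).} The $\alpha$-contraction
\begin{equation*}
d_2(S_t[\rho_0],S_t[\mu_0])\leq e^{-\alpha t}\,d_2(\rho_0,\mu_0)
\end{equation*}
is the standard output of convexity along generalized geodesics \cite[Theorem 11.2.1]{Ambrosio}. When $\alpha>0$, specializing $\mu_0=\bar\rho_{\kappa,\alpha}$ (a fixed point of $S_t$ by Proposition~\ref{min2}) and restarting from any time $t_0>0$ gives the stated $d_2$-decay. The energy decay $E_{\kappa,\alpha}[\rho_t]-E_{\kappa,\alpha}[\bar\rho_{\kappa,\alpha}]\leq e^{-2\alpha(t-t_0)}(E_{\kappa,\alpha}[\rho_{t_0}]-E_{\kappa,\alpha}[\bar\rho_{\kappa,\alpha}])$ is then obtained via the HWI-type inequality together with a Gronwall argument, or equivalently by differentiating the energy along the flow and using $\alpha$-convexity to bound $\frac{d}{dt}E_{\kappa,\alpha}[\rho_t]\leq -2\alpha(E_{\kappa,\alpha}[\rho_t]-E_{\kappa,\alpha}[\bar\rho_{\kappa,\alpha}])$; both are standard consequences of the abstract theory.

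\textbf{Part (4).} The identity is the energy dissipation equality for a curve of maximal slope \cite[Theorem 11.1.3]{Ambrosio}. The discrete scheme satisfies, by construction, a discrete dissipation inequality $\sum \frac{1}{2\tau}d_2^2(\rho^{n-1}_\tau,\rho^n_\tau)+E_{\kappa,\alpha}[\rho^N_\tau]\leq E_{\kappa,\alpha}[\rho_0]$, whose liminf as $\tau\to 0$ yields the $\geq$ direction of the energy identity, while the $\leq$ direction is furnished by the chain rule along absolutely continuous curves combined with the subdifferential characterization. The associated velocity field $v_t$ coincides with the minimal-norm element of $\partial E_{\kappa,\alpha}(\rho_t)$ and solves the continuity equation \eqref{partdist}. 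The principal obstacle throughout is precisely the one already overcome in Section~2: showing that the singular logarithmic interaction is compatible with AGS' framework, which reduces to the convexity claim (C) in the previous subsection, ensuring that geodesics in $D(E_{\kappa,\alpha})$ never hit the diagonal except on a negligible set.
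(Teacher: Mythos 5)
Your proposal is correct and follows essentially the same route as the paper: collect the Section~2 ingredients (properness, lower semi-continuity, $\alpha$-convexity along generalized geodesics, $D(E_{\kappa,\alpha})\subset\mathcal{P}_{2}^{ac}(\mathbb{R})$ dense in $\mathcal{P}_{2}(\mathbb{R})$), check coercivity and existence of minimizers for the one-step scheme, and then invoke the abstract theory of \cite[Theorem 11.2.1]{Ambrosio}, with your remark that for $\alpha=0$ the second moment is controlled by the penalization term being exactly the paper's inequality \eqref{kkk} (triangle inequality borrowing quadratic confinement from the metric term so that Proposition~\ref{min2} applies with effective confinement of order $1/\tau$). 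The additional detail you give for parts (2)--(4) merely unpacks what the cited abstract theorem already provides, so there is no substantive difference.
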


\bigskip

\textbf{Proof. }First notice that $\mathcal{P}_{2}\cap L^{1}\cap L^{\infty
}\subset D(E_{\kappa,\alpha})\subset\mathcal{P}_{2}^{ac}(\mathbb{R})$ by
Remark \ref{domain} and $\overline{D(E_{\kappa,\alpha})}=\mathcal{P}%
_{2}(\mathbb{R})$. Let us start with the case $\alpha>0$. Collecting the
results obtained through previous sections, we have that the functional
$E_{\kappa,\alpha}:\mathcal{P}_{2}(\mathbb{R})\rightarrow(-\infty,+\infty]$ is
a proper, l.s.c., coercive functional and $\alpha-$convex along generalized
geodesics. Moreover, $\mathcal{I}(\mu,\tau;\rho)$ admits at least a minimum
point $\mu_{\tau},$ for all $\tau\in(0,\tau_{\ast})$ and $\mu\in
P_{2}(\mathbb{R})$. The minimum $\mu_{\tau}\in P_{2}^{ac}(\mathbb{R})$ for
$\kappa>0,$ because minimizing sequences are weakly compact in $L^{1}%
(\mathbb{R})$ as in Proposition \ref{min2} and by the Remark
\ref{domain} if $\kappa=0$. Therefore, all the statements result
directly from the general theory of gradient flows developed in
\cite[Theorem 11.2.1]{Ambrosio}. In case $\alpha=0$, we deal with
plain convex functionals along generalized geodesics and the same
results apply. However, we need to be careful with the coercivity
and the existence of minimizers for the one-step variational
scheme since we lack of a direct confinement. This is easily
provided by the following observation using the triangular
inequality and 
\begin{equation}
\mathcal{I}(\tau,\mu;\rho):=\frac{1}{2\tau}d_{2}^{2}(\mu,\rho)+E_{\kappa
,0}[\rho]\geq\frac{1}{4\tau}d_{2}^{2}(\rho,\delta_{0})-\frac{1}{2\tau}%
d_{2}^{2}(\delta_{0},\mu)+E_{\kappa,0}[\rho]=-\frac{1}{\tau}\mathcal{V}%
[\mu]+E_{\kappa,2/\tau}[\rho] \label{kkk}%
\end{equation}
for all $\mu,\rho\in\mathcal{P}_{2}(\mathbb{R})$ and all $\tau>0$. Therefore,
this implies the boundedness from below, the existence of minimizers of the
one-step variational scheme, and the coercivity in the case of $\alpha=0$.
Again the results of \cite[Theorem 11.2.1]{Ambrosio} apply directly.
\hfill$\diamond$\vskip12pt

\begin{remark}
As a consequence of previous theorem, we have shown the global-in-time
well-posedness for the Cauchy problem for general measures in $\mathcal{P}%
_{2}(\mathbb{R})$ as initial data for equations \eqref{eqprinc}
and \eqref{eq3} and their self-similar counterparts \eqref{eq4.1}.
Moreover, we have shown the convergence towards self-similarity in
the sense expressed in the third part of Theorem
\textrm{\rm\ref{teo1}}. Note that the gradient flows obtained in
Theorem \textrm{\rm\ref{teo1}} for the functionals $E_{\kappa,0}$
and $E_{\kappa,1}$ are equivalent through the change of variables
\eqref{change}. Finally, note that the evolution is defined in a
unique way for any initial data in $\mathcal{P}_{2}(\mathbb{R})$.
However, the evolution flow regularizes instantaneously since it
belongs to $\mathcal{P}_{2}^{ac}(\mathbb{R})$ for all $t>0$. This
is the precise mathematical statement showing that the repulsive
logarithmic interaction potential in one dimension is
\textquotedblleft very repulsive\textquotedblright.
\end{remark}

\begin{remark}
\label{Power}(Power-law potentials) Consider the power-law interaction
potential $W(x)=\left\vert x\right\vert ^{-\beta},$ for $0<\beta<1,$ and its
natural extension
\begin{equation}
\tilde{W}_{\beta}(x)=%
\begin{cases}
\left\vert x\right\vert ^{-\beta} & \mbox{ for }x\neq0\\
+\infty & \mbox{ at }x=0
\end{cases}
.\,\label{power1}%
\end{equation}
Let the free energy functional
$E_{\beta,\kappa,\alpha}:\mathcal{P}_{2} (\mathbb{R})\rightarrow$
$\mathbb{R\cup\{+\infty\}}$ be defined as in \eqref{func3} with
$\tilde{W}_{\beta}$ instead of $\tilde{W}.$ Noting that
$\left\vert x\right\vert ^{-\beta}$ is locally integral when
$0<\beta<1,$ similar arguments as in Propositions {\rm\ref{min}}
and {\rm\ref{min2}} give that the infimum
$\vartheta_{\beta,\kappa,\alpha}$ is finite and unique. Moreover,
$\mathcal{P}_{2}\cap L^{1}\cap L^{\infty}(\mathbb{R})\subset D(E_{\beta,\kappa,\alpha}%
)\subset\mathcal{P}_{2}^{ac}(\mathbb{R})$ and $\overline{D(E_{\beta,\kappa,\alpha}%
)}=\mathcal{P}_{2}(\mathbb{R})$. Since the function $\left\vert
x\right\vert ^{-\beta}$ is convex, we have that
$E_{\beta,\kappa,\alpha}$ is $\alpha-$convex along generalized
geodesics. Thus, the results on gradient flows given by Theorem
{\rm \ref{teo1}} also hold true for \eqref{power1} and we can
take initial measure $\rho_{0}\in\mathcal{P}_{2}(\mathbb{R}).$

For range $\beta\geq1,$ we have $D(\mathcal{W})=\{0\}$ and
$\overline {D(E_{\beta,\kappa,\alpha})}=\{0\}$ and then the theory
trivializes. In particular, if $\rho\in D(\mathcal{W})$ were a
continuous function and $K=\{x\in\mathbb{R}:\rho
(x)>\frac{1}{n}\}$ is a positive measure set, for some
$n\in\mathbb{N}$, then
\begin{equation*}
\infty=\frac{1}{n^{2}}\int_{K\times K}\frac{1}{\left\vert
x-y\right\vert ^{\beta}}dxdy\leq\mathcal{W}(\rho)<\infty\text{,}
\end{equation*}
which gives a contradiction. Therefore, the only continuous
function in the domain is $\rho=0$.
\end{remark}

\begin{thm}
\label{teo2} \textbf{(Inviscid limit)} Let us consider the functionals
$E_{\kappa,\alpha}$ and $E_{0,\alpha}$ with $\alpha\geq0$, corresponding to
viscosity $\kappa>0$ and $\kappa=0$ respectively, and assume that $\rho_{0}\in
D(E_{\epsilon_{0},\alpha})$ with $\epsilon_{0}>0$. If $\rho_{\kappa}(t)$,
$\rho(t)$ are the corresponding gradient flow solutions in $\mathcal{P}%
_{2}(\mathbb{R})$ with initial data $\rho_{0}$, then
\[
\rho_{\kappa}(t)\rightarrow\rho(t)\text{ in }\mathcal{P}_{2}(\mathbb{R})
\]
locally uniformly in $[0,\infty)$, as $\kappa\rightarrow0^{+}.$
\end{thm}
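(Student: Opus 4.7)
The plan is a compactness-plus-identification argument built on the uniform $\alpha$-convexity along generalized geodesics of the family $\{E_{\kappa,\alpha}\}_{\kappa\in[0,\epsilon_0]}$ established in the previous section. Since $\rho_0\in D(E_{\epsilon_0,\alpha})$ has finite entropy, the definition \eqref{func3} and Remark \ref{domain} give $\rho_0\in D(E_{\kappa,\alpha})$ for every $\kappa\in[0,\epsilon_0]$, so the flows $\rho_\kappa(t)=S^{\kappa}_t[\rho_0]$ and $\rho(t)=S^0_t[\rho_0]$ are all well defined by Theorem \ref{teo1}.

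The first step is to extract estimates uniform in $\kappa\in(0,\epsilon_0]$. The Free Energy identity of Theorem \ref{teo1}(4) yields
\[
E_{\kappa,\alpha}[\rho_\kappa(t)] + \int_0^t \int_{\mathbb{R}} |v^\kappa_s(x)|^2\,d\rho^\kappa_s(x)\,ds = E_{\kappa,\alpha}[\rho_0] = E_{0,\alpha}[\rho_0] + \kappa\,\mathcal{U}[\rho_0],
\]
whose right-hand side is uniformly bounded on $\kappa\in(0,\epsilon_0]$. The contraction estimate of Theorem \ref{teo1}(2) controls $d_2(\rho_\kappa(t),\delta_0)$ and hence $\mathcal{V}[\rho_\kappa(t)]$ uniformly on bounded time intervals. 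Applying the Gaussian-reference Jensen inequality $\mathcal{U}[\rho]\geq \frac{1}{2}\log(\delta/(2\pi))-\delta\,\mathcal{V}[\rho]$ with a fixed $\delta>0$ then gives a $\kappa$-uniform lower bound $\mathcal{U}[\rho_\kappa(t)]\geq -C$ for $t\in[0,T]$, a one-sided control on the entropy along the flow. Combining everything, I obtain uniform bounds on $\int_0^T\!\int_{\mathbb{R}}|v^\kappa_s|^2 d\rho^\kappa_s\,ds$, on $\mathcal{V}[\rho_\kappa(t)]$ and on $\mathcal{U}[\rho_\kappa(t)]$ from below; the velocity bound yields $\tfrac{1}{2}$-H\"older equicontinuity of $t\mapsto\rho_\kappa(t)$ in $(\mathcal{P}_2(\mathbb{R}),d_2)$, uniform in $\kappa$, and the second-moment bound gives tightness.

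A refined Arzel\`a--Ascoli argument in $C([0,T];(\mathcal{P}_2(\mathbb{R}),d_2))$ produces a subsequence $\kappa_n\to 0^+$ along which $\rho_{\kappa_n}$ converges locally uniformly to some $\tilde\rho$. To identify $\tilde\rho$ as the gradient flow of $E_{0,\alpha}$ I would pass to the limit in the integrated EVI$_\alpha$ inequality available for $\rho_{\kappa_n}$ as a consequence of the $\alpha$-convexity along generalized geodesics (cf.\ \cite[Theorem 11.2.1]{Ambrosio}): for every $\sigma\in D(E_{\epsilon_0,\alpha})$,
\[
\tfrac{1}{2}d_2^2(\rho_{\kappa_n}(t),\sigma) - \tfrac{1}{2}d_2^2(\rho_{\kappa_n}(s),\sigma) + \int_s^t \Bigl[\tfrac{\alpha}{2} d_2^2(\rho_{\kappa_n}(r),\sigma) + E_{\kappa_n,\alpha}[\rho_{\kappa_n}(r)]\Bigr]dr \leq (t-s)\,E_{\kappa_n,\alpha}[\sigma].
\]
The right-hand side converges to $(t-s)\,E_{0,\alpha}[\sigma]$ since $\sigma$ has finite entropy. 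On the left, $\kappa_n\,\mathcal{U}[\rho_{\kappa_n}(r)]\geq -C\kappa_n\to 0$ together with lower semi-continuity of $E_{0,\alpha}$ (Lemma \ref{weak}) and Fatou give $\liminf_n \int_s^t E_{\kappa_n,\alpha}[\rho_{\kappa_n}(r)]\,dr\geq \int_s^t E_{0,\alpha}[\tilde\rho(r)]\,dr$. Hence $\tilde\rho$ satisfies the EVI$_\alpha$ inequality for $E_{0,\alpha}$ at all tests $\sigma\in D(E_{\epsilon_0,\alpha})$, extended by a standard mollification/density argument to all $\sigma\in D(E_{0,\alpha})$. The uniqueness in Theorem \ref{teo1}(1) forces $\tilde\rho=\rho$, and since the subsequential limit is independent of the chosen subsequence the whole family $\rho_\kappa(t)$ converges to $\rho(t)$ locally uniformly in $[0,\infty)$.

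The main obstacle is precisely the one-sidedness of the control on $\mathcal{U}[\rho_\kappa(t)]$: a natural doubling-of-variables EVI estimate would give
\[
\frac{d}{dt}\tfrac{1}{2}d_2^2(\rho_\kappa(t),\rho_{\kappa'}(t)) + \alpha\,d_2^2(\rho_\kappa(t),\rho_{\kappa'}(t)) \leq (\kappa'-\kappa)\bigl(\mathcal{U}[\rho_\kappa(t)]-\mathcal{U}[\rho_{\kappa'}(t)]\bigr),
\]
but its right-hand side is not directly controllable as $\kappa,\kappa'\to 0^+$ since $\mathcal{U}[\rho_\kappa]$ need not be bounded above uniformly in $\kappa$. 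The compactness-plus-identification route sidesteps this: only the uniform lower bound on $\mathcal{U}$, not any uniform upper bound, is needed for the $\Gamma$-lim\,inf step in the identification of $\tilde\rho$.
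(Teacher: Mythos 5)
Your strategy is genuinely different from the paper's. The paper does not redo any compactness argument by hand: it observes that the whole statement follows from the abstract stability result for $\Lambda$-convex gradient flows in \cite[Theorem 12.2.1]{Ambrosio}, so the proof reduces to checking two hypotheses, namely $\sup_{\kappa\in(0,\epsilon_{0})}E_{\kappa,\alpha}[\rho_{0}]<\infty$ (immediate from $\rho_{0}\in D(E_{\epsilon_{0},\alpha})$) and the equicoercivity $\inf_{\kappa,\rho}\bigl\{\frac{1}{2\tau}d_{2}^{2}(\mu,\rho)+E_{\kappa,\alpha}[\rho]\bigr\}>-\infty$, which is obtained from \eqref{kkk} together with the Gaussian relative-entropy bound \eqref{aux-alpha} and \eqref{unifBaixo}; note that the $\kappa$-uniform entropy lower bound you invoke is exactly the same ingredient, used there once and for all at the level of the penalized functional rather than along the flow. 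Your plan instead reproves a version of that stability theorem: uniform energy/dissipation estimates, Arzel\`a--Ascoli, and identification of the limit by passing to the limit in the integrated EVI$_{\alpha}$ and invoking uniqueness. That route is viable and more self-contained, but as written it has two concrete gaps.

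First, the uniform second-moment bound is not justified: the contraction property of Theorem \ref{teo1}(2) compares two solutions of the \emph{same} semigroup, and $\delta_{0}$ is neither a fixed point nor even in $D(E_{\kappa,\alpha})$, so ``the contraction estimate controls $d_{2}(\rho_{\kappa}(t),\delta_{0})$'' does not follow; comparing with $S_{t}^{\kappa}[\delta_{0}]$ only shifts the problem to a $\kappa$-uniform bound on that curve, and for $\alpha=0$ (which the theorem covers) there is no decay factor and no confinement at all. The standard repair is the absolute-continuity estimate $d_{2}(\rho_{\kappa}(t),\rho_{0})\le\int_{0}^{t}\|v_{s}^{\kappa}\|_{L^{2}_{\rho_{s}}}ds$ combined with the energy identity and the lower bound $E_{\kappa,0}[\rho]\ge -C(1+\mathcal{V}[\rho])$, closed by a Gr\"onwall loop on bounded intervals; this must be spelled out because the entropy lower bound, the dissipation bound and the tightness you use all feed on it. Second, Arzel\`a--Ascoli in $C([0,T];(\mathcal{P}_{2},d_{2}))$ requires pointwise relative compactness in $d_{2}$, and tightness (equivalently, bounded second moments) gives only narrow compactness; without uniform integrability of $|x|^{2}$ under $\rho_{\kappa}(t)$ you cannot extract a $d_{2}$-convergent subsequence. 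You can either prove such uniform integrability, or extract limits in the narrow topology and use lower semicontinuity of $d_{2}$ and of $E_{0,\alpha}$ (Lemma \ref{weak} gives $d_{2}$-l.s.c. of $\mathcal{W}$, so some care is needed if you only have narrow convergence when $\alpha=0$) to still close the EVI limit, upgrading to $d_{2}$-convergence afterwards. With these two points repaired, the identification step via the integrated EVI$_{\alpha}$, the density argument in the class of test measures $\sigma$, and uniqueness from Theorem \ref{teo1}(1) are sound.
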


\textbf{Proof.} In view of the stability property of \cite[Theorem
12.2.1]{Ambrosio}, we need only to verify (in a neighborhood of $\kappa=0$)
the equicoercivity of the family of functionals $\{E_{\kappa,\alpha}%
\}_{\kappa\geq0}$ and the uniform boundedness at $\rho_{0}$. More precisely,
we need to show
\[
\sup_{\kappa\in(0,\epsilon_{0})}\ E_{\kappa,\alpha}[\rho_{0}]<\infty
\ \ \ \ \text{and }\inf_{\kappa\in(0,\epsilon_{0}),\text{ }\rho\in
\mathcal{P}_{2}(\mathbb{R})}\frac{1}{2\tau}d_{2}^{2}(\mu,\rho)+E_{\kappa
,\alpha}[\rho]>-\infty,
\]
for some $\tau>0$ and $\mu\in$ $\mathcal{P}_{2}(\mathbb{R})$ and $\epsilon
_{0}>0$. Firstly, observe that $\rho_{0}\in D(E_{\epsilon_{0},\alpha})$
implies $\mathcal{U}[\rho_{0}]<\infty,$ $\mathcal{V}[\rho_{0}]<\infty$ and
$\mathcal{W}[\rho_{0}]<\infty.$ It follows from (\ref{func3}) that $\rho
_{0}\in D(E_{\kappa,\alpha})$ for all $\kappa\in(0,\infty).$ Also,
\[
\sup_{\kappa\in(0,\epsilon_{0})}\ E_{\kappa,\alpha}[\rho_{0}]\leq
\max\{0,\epsilon_{0}\,\mathcal{U}[\rho_{0}]\}+\alpha\mathcal{V}[\rho
_{0}]+\mathcal{W}[\rho_{0}]<\infty\,.
\]

In order to conclude the proof, it remains verify the equicoercivity. By using
\eqref{kkk}, we observe
\[
\frac{1}{2\tau}d_{2}^{2}(\mu,\rho)+E_{\kappa,\alpha}[\rho]\geq\frac{1}{2\tau
}d_{2}^{2}(\mu,\rho)+E_{\kappa,0}[\rho]\geq-\frac{1}{\tau}\mathcal{V}%
[\mu]+E_{\kappa,1/2\tau}[\rho]
\]
for all $\alpha\geq0$. Let us split the functional $E_{\kappa,1/\tau}$ as
\[
E_{\kappa,1/2\tau}[\rho]=\kappa\mathcal{U}[\rho]+\frac{1}{2\tau}\mathcal{V}%
[\rho]+\mathcal{W}[\rho]=E_{0,1/4\tau}[\rho]+\kappa\mathcal{U}[\rho]+\frac
{1}{4\tau}\mathcal{V}[\rho]\,.
\]
Let us now remark that $\kappa\mathcal{U}+\frac{\alpha}{2}\mathcal{V}$ is
bounded from below. Note that it is the relative logarithmic entropy
functional leading to the classical linear Fokker-Planck equation whose
minimum is a Gaussian $M(x)$ determined by
\[
M(x)=\left(  4\pi\frac{\kappa}{\alpha}\right)  ^{-1/2}\exp\left(
-\frac{\alpha x^{2}}{4\kappa}\right)  \,.
\]
Therefore, we get
\begin{equation}
\kappa\mathcal{U}[\rho]+\frac{\alpha}{2}\mathcal{V}[\rho]\geq\kappa
\mathcal{U}[M]+\frac{\alpha}{2}\mathcal{V}[M]=-\frac{1}{2}\log\left(
4\pi\frac{\kappa}{\alpha}\right)  \geq-\frac{1}{2}\log\left(  4\pi
\frac{\epsilon_{0}}{\alpha}\right)  , \label{aux-alpha}%
\end{equation}
for $\kappa\in(0,\epsilon_{0})$. Due to Proposition \ref{min2} and
\eqref{unifBaixo}, $E_{0,1/4\tau}$ is also bounded from below by
$\vartheta_{0,1/4\tau}.$ Using (\ref{aux-alpha}) with
$\alpha=\frac{1}{2\tau}$, we conclude
\[
\inf_{\kappa\in(0,\epsilon_{0}),\text{ }\rho\in\mathcal{P}_{2}(\mathbb{R}%
)}\left(
\frac{1}{2\tau}d_{2}^{2}(\mu,\rho)+E_{\kappa,\alpha}[\rho]\right)
\geq-\frac{1}{\tau}\mathcal{V}[\mu]-\frac{1}{2}\log\left(
8\pi\epsilon _{0}\tau\right)  +\vartheta_{0,1/4\tau}\,.
\]
\hfill$\diamond$\vskip12pt

\subsection{Solutions in sense of distributions.}

An important point is to know whether the gradient flow solutions are
solutions in the sense of distributions. Firstly, let us define the notion of
weak solution which we deal with. We say that a measure $\rho_{t}$ is a weak
solution to equation (\ref{eq4.1}), with initial condition $\rho_{0}$, if for
all $\varphi\in C_{0}^{\infty}(\mathbb{R)}$
\begin{equation}
\frac{d}{dt}\int_{\mathbb{R}}\varphi(x)d\rho_{t}=\kappa\int_{\mathbb{R}%
}\varphi^{\prime\prime}(x)d\rho_{t}-\int_{\mathbb{R}}\varphi^{\prime}%
(x)xd\rho_{t}+\frac{1}{2}\int_{\mathbb{R}^{2}}\frac{\varphi^{\prime
}(x)-\varphi^{\prime}(y)}{x-y}d(\rho_{t}\times\rho_{t}) \label{weak1}%
\end{equation}
in the distributional sense in $(0,\infty)$ with $\rho_{t}\rightharpoonup
\rho_{0}$ weakly-$\ast$ as measures.

In order to obtain a connection between gradient flows and weak solutions, we
need to describe, see \cite{Ambrosio}, the minimal selection of the
subdifferential of $E=E_{\kappa,1}$, that is the set $\partial^{\circ
}E_{\kappa,1}(\rho_{t}).$ For that matter we need to consider the following
functional: for each fixed $\rho\in\mathcal{P}_{2}(\mathbb{R})$ we define
$L_{\rho}:C_{0}^{1}(\mathbb{R)\rightarrow R}$ as%
\begin{equation}
L_{\rho}(\varphi)=\lim_{\delta\rightarrow0^{+}}\int_{|x-y|\geq\delta}\frac
{1}{x-y}\varphi(x)d(\rho\times\rho)=\lim_{\delta\rightarrow0^{+}}\frac{1}%
{2}\int_{|x-y|\geq\delta}\frac{\varphi(x)-\varphi(y)}{x-y}d(\rho\times
\rho)<\infty,\forall\varphi\in C_{0}^{1}(\mathbb{R)}. \label{weak-L}%
\end{equation}
It is straightforward to check that
\begin{equation}
\left\vert L_{\rho}(\varphi)\right\vert \leq\frac{1}{2}\left\Vert
\varphi^{\prime}\right\Vert _{\infty}\int_{\mathbb{R}^{2}}d(\rho\times
\rho)=\frac{1}{2}\left\Vert \varphi^{\prime}\right\Vert _{\infty}\text{ ,}
\label{f1}%
\end{equation}
and therefore $L_{\rho}\in(C_{0}^{1}(\mathbb{R))}^{\ast}.$ So, there exists
$\mu\in\mathcal{M}(\mathbb{R)}$ and a constant $c_{0}$ such that (see
\cite[p.225]{Folland})
\[
L_{\rho}(\varphi)=\int_{\mathbb{R}}\varphi^{\prime}d\mu+c_{0}\varphi
(0)\quad\forall\varphi\in C_{0}^{1}(\mathbb{R)}.
\]
{F}rom (\ref{f1}), we can also see that $c_{0}=0$ and we obtain the following
representation to $L_{\rho}$:
\begin{equation}
L_{\rho}(\varphi)=\int_{\mathbb{R}}\varphi^{\prime}d\mu,\ \forall\varphi\in
C_{0}^{1}(\mathbb{R)}. \label{aux-repres}%
\end{equation}

\begin{lem}
\label{lem1}Let $\kappa\geq0$ and $\mu$ as mentioned above. If a measure
$\rho\in D(E_{\kappa,\alpha})\subset\mathcal{P}_{2}(\mathbb{R})$ belongs to
$D(|\partial E_{\kappa,\alpha}|)$ then we have $(\kappa\rho+\mu)\in
W_{\mathrm{{loc}}}^{1,1}(\mathbb{R})$ and
\begin{equation}
\rho\omega=\partial_{x}(\kappa\rho+\mu)+\alpha\rho x\hspace{0.5cm}\text{for
some }\omega\in L_{\rho}^{2}(\mathbb{R}). \label{eq13}%
\end{equation}
In this case the vector $\omega$ defined by \eqref{eq13} is the minimal
selection in $\partial E_{\kappa,\alpha}[\rho],$ i.e. $\omega=\partial^{\circ
}E_{\kappa,\alpha}[\rho].$
\end{lem}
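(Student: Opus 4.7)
The plan is to identify the minimal selection $\omega=\partial^{\circ}E_{\kappa,\alpha}[\rho]$ by perturbing $\rho$ along smooth vector fields and reading off the subdifferential from the variational characterization available for $\alpha$-convex functionals. Since $E_{\kappa,\alpha}$ is $\alpha$-convex along generalized geodesics, a vector $\omega\in L_{\rho}^{2}(\mathbb{R})$ belongs to $\partial E_{\kappa,\alpha}[\rho]$ iff
\begin{equation*}
E_{\kappa,\alpha}[\sigma]\geq E_{\kappa,\alpha}[\rho]+\int_{\mathbb{R}}\omega(x)\bigl(T_{\rho}^{\sigma}(x)-x\bigr)\,d\rho(x)+\frac{\alpha}{2}d_{2}^{2}(\rho,\sigma),\qquad\forall\sigma\in D(E_{\kappa,\alpha}).
\end{equation*}
Specializing to $\sigma:=(Id+\varepsilon\xi)_{\sharp}\rho$ for arbitrary $\xi\in C_{c}^{\infty}(\mathbb{R})$ and letting $\varepsilon\to 0^{+}$ with both $\pm\xi$ yields the identity
\begin{equation*}
\int_{\mathbb{R}}\omega\,\xi\,d\rho=\frac{d}{d\varepsilon}\bigg|_{\varepsilon=0^{+}}E_{\kappa,\alpha}\bigl[(Id+\varepsilon\xi)_{\sharp}\rho\bigr],
\end{equation*}
whenever the right-hand side exists.

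Next I would evaluate this derivative piecewise using the decomposition $E_{\kappa,\alpha}=\kappa\mathcal{U}+\alpha\mathcal{V}+\mathcal{W}$. For $\alpha\mathcal{V}$ one trivially gets $\alpha\int x\xi\rho\,dx$. For $\kappa\mathcal{U}$, exploiting $\partial_{\varepsilon}\rho_{\varepsilon}|_{\varepsilon=0}=-\partial_{x}(\rho\xi)$ and integration by parts gives $-\kappa\int\xi'\rho\,dx$, which is read as $\kappa\int\xi\,\partial_{x}\rho\,dx$ distributionally. For the interaction $\mathcal{W}$, differentiating
\begin{equation*}
\tfrac{1}{2}\iint\tilde{W}\bigl((x-y)+\varepsilon(\xi(x)-\xi(y))\bigr)\,d(\rho\times\rho)
\end{equation*}
at $\varepsilon=0$ produces $\tfrac{1}{2}\iint\tilde{W}'(x-y)(\xi(x)-\xi(y))\,d(\rho\times\rho)$, which by \eqref{weak-L} is, up to the $1/\pi$ factor, $-L_{\rho}(\xi)$. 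Inserting the representation $L_{\rho}(\xi)=\int\xi'\,d\mu$ from \eqref{aux-repres}, one arrives at
\begin{equation*}
\int_{\mathbb{R}}\xi\,(\omega\rho-\alpha x\rho)\,dx=-\int_{\mathbb{R}}\xi'\bigl(\kappa\,d\rho+d\mu\bigr),\qquad\forall\xi\in C_{c}^{\infty}(\mathbb{R}),
\end{equation*}
the $1/\pi$ normalisation being absorbed into $\mu$ to match the conventions of \eqref{eq13}.

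This last identity is precisely the statement that $\omega\rho-\alpha x\rho$ is the distributional derivative of the locally finite measure $\kappa\rho+\mu$. Since $\omega\in L_{\rho}^{2}$ and $\rho$ is a probability measure, $\omega\rho$ is locally integrable by Cauchy-Schwarz; combined with $\alpha x\rho\in L_{\mathrm{loc}}^{1}(\mathbb{R})$, this forces $\partial_{x}(\kappa\rho+\mu)\in L_{\mathrm{loc}}^{1}(\mathbb{R})$, so $(\kappa\rho+\mu)\in W_{\mathrm{loc}}^{1,1}(\mathbb{R})$ and \eqref{eq13} holds. The minimality $\omega=\partial^{\circ}E_{\kappa,\alpha}[\rho]$ follows because the identity uniquely determines $\omega$ against a dense subspace of the tangent space $Tan_{\rho}\mathcal{P}_{2}(\mathbb{R})\subset L_{\rho}^{2}(\mathbb{R})$, and $\partial^{\circ}E_{\kappa,\alpha}[\rho]$ is by definition the unique subdifferential element lying in that tangent space (cf.~\cite{Ambrosio}).

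The main technical obstacle is the rigorous differentiation under the integral for $\mathcal{W}$: the logarithmic singularity of $\tilde{W}$ is precisely where the perturbation could cause trouble. One must exploit the Lipschitz bound on $\xi$ to keep $(x-y)+\varepsilon(\xi(x)-\xi(y))$ comparable to $x-y$ for $\varepsilon$ small, producing a dominating function proportional to $|\xi(x)-\xi(y)|/|x-y|$, which is bounded by $\|\xi'\|_{\infty}$ and therefore integrable against $\rho\times\rho$. Once dominated convergence applies, identifying the resulting principal-value expression with $L_{\rho}(\xi)$ is immediate from \eqref{weak-L}.
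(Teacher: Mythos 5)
Your proposal is correct in substance and computes the same three directional derivatives, but it is organized in the opposite direction from the paper's proof. You start from the abstract theory for $\alpha$-convex functionals: you take as given that, since $E_{\kappa,\alpha}$ is l.s.c. and $\alpha$-convex along generalized geodesics and $|\partial E_{\kappa,\alpha}|[\rho]<\infty$, the subdifferential is nonempty, the minimal selection $\omega=\partial^{\circ}E_{\kappa,\alpha}[\rho]$ exists and lies in $Tan_{\rho}\mathcal{P}_{2}(\mathbb{R})$, and that membership in $\partial E_{\kappa,\alpha}[\rho]$ is characterized by the global variational inequality; you then identify $\omega$ by testing with $(Id+\varepsilon\xi)_{\sharp}\rho$ (note that $Id+\varepsilon\xi$ is increasing for small $\varepsilon$, hence optimal, which your computation of $d_{2}$ and of $T_{\rho}^{\sigma}$ silently uses). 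The paper goes the other way: it bounds the linear functional $\varphi\mapsto A_{1}(\varphi)+A_{2}(\varphi)+A_{3}(\varphi)$ by $|\partial E_{\kappa,\alpha}|[\rho]\,\|\varphi\|_{L^{2}_{\rho}}$ straight from the definition of the metric slope and then produces $\omega$ by Riesz representation in $L^{2}_{\rho}(\mathbb{R})$, so that existence of $\omega$ and the bound $\|\omega\|_{L^{2}_{\rho}}\leq|\partial E_{\kappa,\alpha}|[\rho]$ (hence minimality) come out of the argument itself rather than being imported. For your route to be complete you must cite explicitly the statement you are relying on (finite slope plus $\lambda$-convexity plus lower semicontinuity implies $\partial E_{\kappa,\alpha}[\rho]\neq\emptyset$ and slope $=$ minimal norm, cf. \cite{Ambrosio}); without it there is no $\omega$ to test, so this is the one step you should not leave implicit. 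Two smaller differences are worth noting: for the interaction term the paper uses convexity of $-\log$ to get monotone difference quotients and applies monotone convergence, while your dominated-convergence argument with the constant majorant of order $\|\xi'\|_{\infty}$ is equally valid because $\rho\times\rho$ is finite and does not charge the diagonal ($\rho\in\mathcal{P}_{2}^{ac}(\mathbb{R})$ with $\mathcal{W}[\rho]<\infty$); and your direct identification of $\partial_{x}(\kappa\rho+\mu)$ with the $L^{1}_{\mathrm{loc}}$ function $(\omega-\alpha x)\rho$ reaches $(\kappa\rho+\mu)\in W^{1,1}_{\mathrm{loc}}(\mathbb{R})$ a bit more quickly than the paper's detour through $BV(\mathbb{R})$ and $L^{2}_{\rho}\cap\mathcal{M}(\mathbb{R})$. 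Finally, the closing remark about determining $\omega$ against a dense subspace of the tangent space is not needed: once \eqref{eq13} holds, $\omega$ is determined $\rho$-a.e., and minimality is automatic because you chose $\omega=\partial^{\circ}E_{\kappa,\alpha}[\rho]$ from the outset.
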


\bigskip\textbf{Proof.} For each compactly supported smooth test function
$\varphi\in C_{0}^{\infty}(\mathbb{R)}$, let us consider the map
$\psi_{\varepsilon}:=Id+\varepsilon\varphi.$ It is easy check that
$\psi_{\varepsilon\#}\rho\in$ $D(E_{\kappa,\alpha}),$ when $\rho\in
D(E_{\kappa,\alpha})$ and for $\varepsilon>0$ small enough. Since $\rho\in
D(|\partial E_{\kappa,\alpha}|)$ and from the definition of metric slope
$|\partial E_{\kappa,\alpha}|[\rho]$, see \cite{Ambrosio}, we have
\begin{align*}
\left\vert A_{1}(\varphi)+A_{2}(\varphi)+A_{3}(\varphi)\right\vert  &
\,:=\left\vert \kappa\lim_{\varepsilon\rightarrow0}\frac{\mathcal{U}%
[\psi_{\varepsilon\#}\rho]-\mathcal{U}[\rho]}{\varepsilon}+\alpha
\lim_{\varepsilon\rightarrow0}\frac{\mathcal{V}[\psi_{\varepsilon\#}%
\rho]-\mathcal{V}[\rho]}{\varepsilon}+\lim_{\varepsilon\rightarrow0}%
\frac{\mathcal{W}[\psi_{\varepsilon\#}\rho]-\mathcal{W}[\rho]}{\varepsilon
}\right\vert \\
&  \leq|\partial E_{\kappa,\alpha}|[\rho]\lim_{\varepsilon\rightarrow0}%
\frac{d_{2}(\psi_{\varepsilon\#}\rho,\rho)}{\varepsilon}<\infty.
\end{align*}
The terms $A_{1}$ and $A_{2}$ can be exactly treated as in \cite[Chapter
11]{Ambrosio} and one obtains
\[
A_{1}(\varphi)=-\kappa\int_{\mathbb{R}}\varphi^{\prime}\,d\rho\text{ \ and
\ \ }A_{2}(\varphi)=\alpha\int_{\mathbb{R}}x\varphi\,d\rho.\text{\ \ }%
\]
Now, we deal with the term $A_{3}$. Notice that the map
\[
Q(\varepsilon,x,y)=\frac{1}{2}\frac{-\log|(x-y+\varepsilon(\varphi
(x)-\varphi(y)))|-(-\log|x-y|)}{\varepsilon}%
\]
is nondecreasing in $\varepsilon>0,$ for fixed $x,y\neq0.$ As $A_{1}(\varphi)$
and $A_{2}(\varphi)$ are finite, then $A_{3}(\varphi)$ is also finite. By the
monotone convergence theorem, we have
\begin{align*}
A_{3}(\varphi) &  =\lim_{\varepsilon\rightarrow0}\int_{\mathbb{R}^{2}%
}Q(\varepsilon,x,y)d(\rho\times\rho)=-\frac{1}{2}\int_{\mathbb{R}^{2}}%
\frac{\varphi(x)-\varphi(y)}{x-y}d(\rho\times\rho)\\
&  =-L_{\rho}(\varphi)=-\int_{\mathbb{R}}\varphi^{\prime}\,d\mu<\infty.
\end{align*}
Notice that the second integral above is not a singular integral because
$\varphi\in C_{0}^{\infty}(\mathbb{R)}.$ Observing that
\[
\lim_{\varepsilon\rightarrow0}\frac{d_{2}(\psi_{\varepsilon\#}\rho,\rho
)}{\varepsilon}\leq\left\Vert \varphi\right\Vert _{L_{\rho}^{2}(\mathbb{R)}},
\]
we get
\begin{align*}
(A_{1}+A_{2}+A_{3})(\varphi) &  =\int_{\mathbb{R}}-\kappa\varphi^{^{\prime}%
}(x)d\rho+\alpha\int_{\mathbb{R}}x\varphi(x)d\rho-L_{\rho}(\varphi)\\
&  \geq-|\partial E_{\kappa,\alpha}|[\rho]\lim_{\varepsilon\rightarrow0}%
\frac{d_{2}(\psi_{\varepsilon\#}\rho,\rho)}{\varepsilon}\geq-|\partial
E_{\kappa,\alpha}|[\rho]\left\Vert \varphi\right\Vert _{L_{\rho}%
^{2}(\mathbb{R)}}.
\end{align*}
Changing $\varphi$ by $-\varphi$, we finally obtain
\[
\left\vert \int_{\mathbb{R}}(-\kappa\varphi^{\prime}+\alpha x\varphi
)d\rho-L_{\rho}(\varphi)\right\vert \leq|\partial E_{\kappa,\alpha}%
|[\rho]\left\Vert \varphi\right\Vert _{L_{\rho}^{2}(\mathbb{R)}}.
\]
So, there exists $\omega\in L_{\rho}^{2}(\mathbb{R)}$ with $\left\Vert
\omega\right\Vert _{L_{\rho}^{2}(\mathbb{R)}}\leq$ $|\partial E_{\kappa
,\alpha}|[\rho]$ such that
\begin{align}
\int_{\mathbb{R}}\omega\varphi\,d\rho &  =(A_{1}+A_{2}+A_{3})(\varphi
)\nonumber\\
&  =-\left(  \kappa\int_{\mathbb{R}}\varphi^{\prime}\,d\rho+L_{\rho}%
(\varphi)\right)  +\alpha\int_{\mathbb{R}}x\varphi\,d\rho\text{, }%
\forall\varphi\in C_{0}^{\infty}(\mathbb{R)}.\label{f2}%
\end{align}
Thus $\omega\in\partial E_{\kappa,\alpha}[\rho]$ and $\omega$ is the minimal
selection in $\partial E_{\kappa,\alpha}[\rho]$, i.e. $\omega=\partial^{\circ
}E_{\kappa,\alpha}[\rho]$. Finally, let us characterize $\omega$. Since
$\rho\in\mathcal{P}_{2}(\mathbb{R})$ implies that $\psi\lbrack\varphi
]=\int_{\mathbb{R}}x\varphi\,d\rho$ is bounded in $L_{\rho}^{2}(\mathbb{R)}$
(with norm at most $\int_{\mathbb{R}}x^{2}d\rho$), we get
\begin{align*}
\left\vert <\partial_{x}(\kappa\rho+\mu),\varphi>\right\vert  &  =\left\vert
\int_{\mathbb{R}}\varphi^{\prime}\,d(\kappa\rho+\mu)\right\vert \leq\left(
|\partial E_{\kappa,\alpha}|(\rho)+\alpha\int_{\mathbb{R}}x^{2}d\rho\right)
\left\Vert \varphi\right\Vert _{L_{\rho}^{2}(\mathbb{R)}}\\
&  \leq\left(  |\partial E_{\kappa,\alpha}|(\rho)+\alpha\int_{\mathbb{R}}%
x^{2}d\rho\right)  \left\Vert \varphi\right\Vert _{\infty}\,.
\end{align*}
Therefore, $\partial_{x}(\kappa\rho+\mu)\in\mathcal{M}(\mathbb{R{)}},$ i.e.
$(\kappa\rho+\mu)\in BV(\mathbb{R)}.$ Integration by parts holds:
\[
\left\vert \int_{\mathbb{R}}\varphi(x)d(\partial_{x}(\kappa\rho+\mu
))\right\vert \leq\left(  |\partial E_{\kappa,\alpha}|(\rho)+\alpha
\int_{\mathbb{R}}x^{2}d\rho\right)  \left\Vert \varphi\right\Vert _{L_{\rho
}^{2}(\mathbb{R)}},
\]
which implies $\partial_{x}(\kappa\rho+\mu)\in$ $L_{\rho}^{2}(\mathbb{R)\cap
}\mathcal{M}(\mathbb{R)}$ and then $(\kappa\rho+\mu)\in W_{loc}^{1,1}%
(\mathbb{R)}.$ Finally, coming back to (\ref{f2}), we obtain the
following expression for $\omega$, the element of minimal norm in
the subdifferential of $E_{\kappa,\alpha}$:
$\rho\omega=\partial_{x}(\kappa\rho+\mu)+\alpha x\rho$.
\hfill$\diamond$\vskip12pt

The next theorem gives a connection between gradient flows and the notion of
weak solution (\ref{weak1}).

\begin{thm}\label{dist}
\textbf{(Distributional solution) }Let $\mu_{t}$ correspond to
$\rho_{t}$ through (\ref{weak-L}). For every
$\rho_{0}\in\mathcal{P}_{2}(\mathbb{R})$ and
every $\kappa,\alpha\geq0$, the gradient flow $\rho_{t}$ in $\mathcal{P}%
_{2}(\mathbb{R})$ of the functional $E_{\kappa,\alpha}$ is a distributional
solution of the equation
\begin{equation}
\frac{\partial\rho_{t}}{\partial t}=\frac{\partial}{\partial x}(\rho_{t}%
\omega_{t})=\frac{\partial}{\partial x}\left[  \rho_{t}\left(  \frac
{\partial_{x}(\kappa\rho_{t}+\mu_{t})}{\rho_{t}}+\alpha x\right)  \right]  ,
\label{sol1}%
\end{equation}
satisfying $\rho(t)\rightarrow\rho_{0}$ as $t\rightarrow0^{+},$ $\rho_{t}\in
L_{\mathrm{{loc}}}^{1}((0,+\infty);W_{\mathrm{{loc}}}^{1,1}(\mathbb{R})),$
and
\[
\left\Vert \frac{\partial}{\partial x}\left(  \frac{\partial_{x}(\kappa
\rho_{t}+\mu_{t})}{\rho_{t}}+\alpha x\right)  \right\Vert _{L^{2}(\rho
_{t};\mathbb{R})}\in L_{\mathrm{{loc}}}^{2}(0,+\infty).
\]
\end{thm}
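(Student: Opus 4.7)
The plan is to combine the abstract gradient flow characterization of $\rho_t$ supplied by Theorem \ref{teo1} with the explicit description of the minimal subdifferential obtained in Lemma \ref{lem1}, and then to substitute the latter into the continuity equation that accompanies any gradient flow in $(\mathcal{P}_2(\mathbb{R}), d_2)$.

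First, by Theorem \ref{teo1} together with \cite{Ambrosio}, the curve $\rho_t = S_t[\rho_0]$ is locally Lipschitz in $(\mathcal{P}_2(\mathbb{R}), d_2)$, lies in $\mathcal{P}_2^{ac}(\mathbb{R})$ for every $t>0$, and tends to $\rho_0$ in $d_2$ as $t\to 0^+$ (which in particular yields the weak-$*$ initial trace). It belongs to $D(|\partial E_{\kappa,\alpha}|)$ for a.e. $t>0$ with $|\partial E_{\kappa,\alpha}|(\rho_t)\in L^2_{\mathrm{loc}}(0,\infty)$, and there is a Borel velocity field $-v_t\in\partial^{\circ}E_{\kappa,\alpha}[\rho_t]$ (the minimal-norm selection) satisfying $\|v_t\|_{L^2_{\rho_t}}=|\partial E_{\kappa,\alpha}|(\rho_t)$ together with the continuity equation \eqref{partdist}.

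At every such $t$, Lemma \ref{lem1} identifies $\omega_t:=-v_t$ as
\begin{equation*}
\omega_t = \frac{\partial_x(\kappa\rho_t+\mu_t)}{\rho_t}+\alpha x,
\end{equation*}
with $(\kappa\rho_t+\mu_t)\in W^{1,1}_{\mathrm{loc}}(\mathbb{R})$ and $\mu_t$ associated to $\rho_t$ through \eqref{weak-L}. Plugging this identification into $\partial_t\rho_t+\partial_x(\rho_t v_t)=0$ and testing against functions of the product form $\psi(t)\varphi(x)$ produces \eqref{sol1} in the distributional sense on $(0,\infty)\times\mathbb{R}$. For the remaining regularity claims, the proof of Lemma \ref{lem1} bounds the $W^{1,1}_{\mathrm{loc}}$-norm of $\kappa\rho_t+\mu_t$ by $|\partial E_{\kappa,\alpha}|(\rho_t)+\alpha\int x^2\,d\rho_t$, so integrating the $L^2_{\mathrm{loc}}$-in-time bound on the metric slope gives $\rho_t\in L^1_{\mathrm{loc}}((0,\infty);W^{1,1}_{\mathrm{loc}}(\mathbb{R}))$; the $L^2_{\mathrm{loc}}$-in-time integrability of $\|\omega_t\|_{L^2_{\rho_t}}$ is precisely the square-integrability of the metric slope along the gradient flow.

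The main obstacle I expect is the measurability-in-time issue: Lemma \ref{lem1} is pointwise in $t$, so to paste its identities into an honest spacetime distributional equation one must check that $t\mapsto\partial_x(\kappa\rho_t+\mu_t)$ is Borel as a locally integrable-function-valued map and that a Fubini-type exchange of $dx$-testing and $dt$-integration is legitimate. This is standard in the Ambrosio--Gigli--Savar\'e framework because $t\mapsto\rho_t$ is Borel in $d_2$ and $-v_t=\partial^{\circ}E_{\kappa,\alpha}[\rho_t]$ is the canonical Borel selection; the delicate point is that $\mu_t$ is defined via the principal-value formula \eqref{weak-L}, but the uniform bound $|L_{\rho_t}(\varphi)|\leq\tfrac12\|\varphi'\|_\infty$ proved in the derivation of \eqref{aux-repres} ensures that $t\mapsto\mu_t$ is weak-$*$ Borel as a $(C^1_0(\mathbb{R}))^*$-valued map, which is what is needed to pass from pointwise-in-$t$ identities to distributional identities on $(0,\infty)\times\mathbb{R}$.
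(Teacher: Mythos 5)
Your proposal is correct and follows essentially the same route as the paper: invoke the abstract gradient flow theory behind Theorem \ref{teo1} to get the continuity equation \eqref{partdist} with velocity given by the minimal selection, identify that selection via Lemma \ref{lem1} as $\omega_t=\rho_t^{-1}\partial_x(\kappa\rho_t+\mu_t)+\alpha x$, and substitute to obtain \eqref{sol1} together with the stated integrability of the slope. The only difference is cosmetic: the paper additionally records that \eqref{sol1} is equivalent to the weak formulation \eqref{weak1} through the representation $L_{\rho_t}(\varphi)=\int_{\mathbb{R}}\varphi'\,d\mu_t$, a step not needed for the statement as phrased, while you instead spend your space on the (correctly resolved) time-measurability point.
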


\textbf{Proof.} Since $\omega_{t}=\partial^{\circ}E_{\kappa,\alpha}(\rho_{t})$
and $\rho_{t}$ is the gradient flow of $E_{\kappa,\alpha}$, it follows from
Lemma \ref{lem1} that (\ref{sol1}) is satisfied by $\rho_{t}$ with the
additional conditions found in the statement of the theorem.

Now, observe that $\rho_{t}$ satisfies (\ref{sol1}) is equivalent to $\rho
_{t}$ satisfies
\begin{align*}
\frac{d}{dt}\int_{\mathbb{R}}\varphi(x)d\rho_{t}  &  =-\int_{\mathbb{R}%
}\varphi^{\prime}(x)d(\partial_{x}(\kappa\rho_{t}+\mu_{t}))-\alpha
\int_{\mathbb{R}}\varphi^{\prime}(x)xd\rho_{t}\\
&  =\int_{\mathbb{R}}\varphi^{\prime\prime}(x)d(\kappa\rho_{t}+\mu_{t}%
)-\alpha\int_{\mathbb{R}}\varphi^{\prime}(x)xd\rho_{t}\\
&  =\kappa\int_{\mathbb{R}}\varphi^{\prime\prime}(x)d\rho_{t}-\alpha
\int_{\mathbb{R}}\varphi^{\prime}(x)xd\rho_{t}+\int_{\mathbb{R}}%
\varphi^{\prime\prime}(x)d\mu_{t}\\
&  =\kappa\int_{\mathbb{R}}\varphi^{\prime\prime}(x)d\rho_{t}-\alpha
\int_{\mathbb{R}}\varphi^{\prime}(x)xd\rho_{t}+\frac{1}{2}\int_{\mathbb{R}%
^{2}}\frac{\varphi^{\prime}(x)-\varphi^{\prime}(y)}{x-y}d(\rho_{t}\times
\rho_{t}),
\end{align*}
in the sense of distributions on $t\in(0,\infty)$ and for all $\varphi\in
C_{0}^{\infty}(\mathbb{R)}.$\hfill$\diamond$\vskip12pt

\begin{remark}
Although in the general theory in \cite{Ambrosio} the previous
result is a characterization of gradient flow solutions, we do not
know how to get the converse in the characterization of the
element of minimal norm in Lemma {\rm\ref{lem1}} since we do not
how to show that $\mu$ is absolutely continuous with respect to
$\rho$. This implies that we do not know how to show that
distributional solutions with the properties written in Theorem
{\rm\ref{dist}} are gradient flow solutions.
\end{remark}

\begin{remark}
(Power-law potential) In the case of potential $W(x)=\left\vert
x\right\vert^{-\beta}$ with $0<\beta<1,$ we also have that the
associated gradient flows (see Remark {\rm\ref{Power}}) is a
solution in sense of distributions as in \eqref{weak1} or
\eqref{sol1}. Instead of \eqref{weak-L}, in this time the operator
$L_{\rho}(\varphi)$ is given by
\begin{align*}
L_{\rho}(\varphi) &  =\lim_{\delta\rightarrow0^{+}}\int_{|x-y|\geq\delta
}-\beta\frac{(x-y)}{\left\vert x-y\right\vert ^{\beta+2}}\varphi
(x)d(\rho\times\rho)\\
&  =\lim_{\delta\rightarrow0^{+}}\frac{1}{2}\int_{|x-y|\geq\delta}-\beta
\frac{1}{\left\vert x-y\right\vert ^{\beta}}\frac{\varphi(x)-\varphi(y)}%
{x-y}d(\rho\times\rho)<\infty,
\end{align*}
for $\varphi\in C_{0}^{1}(\mathbb{R)}.$ Thus, since $\rho\in D(E_{\beta,\kappa
,\alpha}),$
\[
\left\vert L_{\rho}(\varphi)\right\vert \leq\frac{\beta}{2}\left\Vert
\varphi^{\prime}\right\Vert _{\infty}\int_{\mathbb{R}^{2}}\frac{1}{\left\vert
x-y\right\vert ^{\beta}}d(\rho\times\rho)=C\mathcal{W}[\rho]\left\Vert
\varphi^{\prime}\right\Vert _{\infty}\text{ ,}%
\]
for all $\varphi\in C_{0}^{1}(\mathbb{R)}.$ Therefore $L_{\rho}\in(C_{0}%
^{1}(\mathbb{R))}^{\ast}$ and, similarly to \eqref{aux-repres},
$L_{\rho}(\varphi)=\int_{\mathbb{R}}\varphi^{\prime}d\mu,\ $for $\varphi\in C_{0}%
^{1}(\mathbb{R)}.$
\end{remark}

\end{document}